\def\qed{\hfill {$\square$}\goodbreak \medskip}
\newtheorem{theorem}{Theorem}[section]
\newtheorem{lemma}[theorem]{Lemma}
\newtheorem{corollary}[theorem]{Corollary}
\theoremstyle{definition}
\newtheorem{example}[theorem]{Example}
\newtheorem{proposition}[theorem]{Proposition}
\newtheorem{remark}[theorem]{Remark}
\numberwithin{equation}{section}
\def\x{{\mathbf x}}
\def\u{{\mathbf u}}
\def\v{{\mathbf v}}
\def\c{{\mathbf c}}
\newcommand{\F}{\mathbb{F}}
\def\C{{\mathcal{C}}}
\def\H{{\mathit{H}}}
\title{
Symplectic  Quasi Self-dual, Self-dual, and LCD Codes over Non-unital Rings of Order Six}
\author{Altaf Alshuhail\thanks{Department of Mathematics, Faculty of Science, University of Ha’il, Ha’il, Saudi Arabia, email: \tt ad.alshuhail@uoh.edu.sa} }
\date{}
\begin{document}

\maketitle

\begin{abstract}
We consider codes over the two semi-local non-unital rings of order six,
{\small \[ H_{23} = \langle a,b \mid 2a=0, 3b = 0, a^2=a, b^2 = 0, ab = 0 = ba \rangle,\]}
and
{\small \[H_{32} = \langle a,b \mid 2a=0, 3b = 0, a^2=0, b^2 = b, ab = 0 = ba \rangle. \]}
with respect to a symplectic inner product. Via the  decomposition \(\C=a\,\C_a\oplus b\,\C_b\), any \(H_z\)-code splits into a binary component \(\C_a\) and a ternary component \(\C_b\); this yields  characterizations of symplectic self-orthogonal, self-dual, and quasi self-dual (QSD) codes, and allows the introduction of symplectic nice and symplectic linear complementary dual (LCD) codes. Using the automorphism groups of \(\C_a\) and \(\C_b\) and double-coset enumeration, we classify, up to permutation equivalence, all symplectic self-orthogonal and QSD \(H_z\)-codes of short even lengths. 
\end{abstract}

\textbf{Keywords:} Semi-local non-unital rings; symplectic codes; symplectic quasi self-dual (QSD) codes; symplectic nice codes; linear complementary dual (LCD) codes.
\section{Introduction}
Codes over rings were first investigated in the early 1990's as a generalization of classical linear codes over fields~\cite{ShiAlahmadiSole2017}. Since 2002, codes over \(\mathbb{Z}_6\) have attracted attention for their connections to Euclidean lattices, ring-linear constructions, and duality frameworks~\cite{Z61,Z62}. Self-dual codes have attracted considerable attention in coding theory and have been extensively studied over both finite fields and rings \cite{HuffmanKimSole2021,MacWilliamsSloane1977,NebeRainsSloane2006,PlessBrualdiHuffman1998}. A suitable analogue of self-duality for non-unital rings of order four is given by quasi-self-dual (QSD) codes with respect to the Euclidean inner product: these are self-orthogonal codes of length \(n\) and size \(2^n\). The notion, developed in \cite{Alahmadi2020,Alahmadi2021,Alahmadi2022}, is motivated by the failure of the classical size relation \(|\C|\,|\C^\perp|=|A|^n\) (and thus \(|\C|=|\C^\perp|\) for self-dual \(\C\)) over non-unital rings.  Self-orthogonal, self-dual, and QSD codes over non-unital rings of orders four, six, and \(p^2\) (for odd primes \(p\)) have been constructed and classified under Euclidean duality~\cite{Altaf,AAABBSS,AE,A1,A2,A3}. More recently,  symplectic inner products have been introduced to define symplectic self-orthogonality and symplectic self-duality over both unital and non-unital rings~\cite{sara1,sara2}. Furthermore, linear complementary dual (LCD) codes—linear codes with complementary duals—were introduced by Massey for codes over finite fields ~\cite{SLCD} and have since attracted significant interest because of their cryptographic applications ~\cite{CarletGuilley2016}. Over non-unital rings, LCD codes can be defined with respect to two inner products (see,  \cite{SLCD,Melaibari.A et al.} for the Euclidean setting) and  (see,  \cite{sara1} for the symplectic setting).

In this paper, we study symplectic codes over the two semi-local non-unital rings of order six, \(H_{23}\) and \(H_{32}\). The paper is structured as follows. Section~\ref{sec:1} collects preliminary definitions and facts about symplectic self-orthogonal, symplectic self-dual, quasi self-dual, symplectic nice, and symplectic LCD codes over the fields \(\F_2\) and \(\F_3\). Section~\ref{sec:rings} is devoted to the study of codes over the rings \(H_{23}\) and \(H_{32}\), including their algebraic properties, code constructions, and duality relations. Section~\ref{sec:duality} develops the theory of symplectic duality for \(H_z\)-codes, providing characterizations of self-orthogonal, self-dual, quasi self-dual, symplectic nice, and LCD codes in terms of their binary and ternary components, as well as explicit construction methods.  Section~\ref{Hclass} classifies permutation-inequivalent symplectic self-orthogonal \(H_z\)-codes of even length \(n\le 8\).

\section{Binary and Ternary Codes}\label{sec:1} 

Let \(m \ge 0\) be an integer, and write \(\mathbb{F}_2^{2m}\) for the \(2m\)-dimensional vector space over the binary field \(\mathbb{F}_2\). Write each vector in the form $\mathbf{v} = (\mathbf{v}_1 \mid \mathbf{v}_2),\, \mathbf{v}_1,\mathbf{v}_2\in\F_2^m.$
Define the \emph{symplectic inner product} on \(\F_2^{2m}\) by
\begin{equation}
\langle \mathbf{x}, \mathbf{y} \rangle_s^{(\F_2)}
\;=\;
\mathbf{x}_1 \cdot \mathbf{y}_2
\;+\;
\mathbf{x}_2 \cdot \mathbf{y}_1.
\end{equation}

where “\(\cdot\)” is the usual dot product in \(\F_2^m\).  A \emph{binary linear code} \(\C\) of length \(n=2m\) and dimension \(k\) is an \(\F_2\)-subspace of \(\F_2^n\) with \(\dim_{\F_2}\C=k\).  Equivalently, one calls \(\C\) an \([n,k]\)-code.  Its \emph{symplectic dual} is
\[
\C^{\perp_s}
\;=\;
\bigl\{
\mathbf{y}\in\F_2^{2m}
\;\bigm|\;
\langle \mathbf{x},\mathbf{y}\rangle_s^{(\F_2)}=0
\;\forall\,\mathbf{x}\in\C
\bigr\}.
\]

Similarly, let \(\F_3^{2m}\) be the \(2m\)-dimensional vector space over the ternary field \(\F_3\), and write $\mathbf{v} = (\mathbf{v}_1 \mid \mathbf{v}_2),\,\mathbf{v}_1,\mathbf{v}_2\in\F_3^m.$
Define the \emph{symplectic inner product} on \(\F_3^{2m}\) by
\begin{equation}
\langle \mathbf{x}, \mathbf{y} \rangle_s^{(\F_3)}
\;=\;
\mathbf{x}_1 \cdot \mathbf{y}_2
\;-\;
\mathbf{x}_2 \cdot \mathbf{y}_1.
\end{equation}

where “\(\cdot\)” is the dot product in \(\F_3^m\).  A \emph{ternary linear code} \(\C\) of length \(2m\) and dimension \(k\) is an \(\F_3\)-subspace of \(\F_3^n\) with \(\dim_{\F_3}\C=k\).  Equivalently, \(\C\) is an \([2m,k]\)-code, and its symplectic dual is
\[
\C^{\perp_s}
\;=\;
\bigl\{
\mathbf{y}\in\F_3^{2m}
\;\bigm|\;
\langle \mathbf{x},\mathbf{y}\rangle_s^{(\F_3)}=0
\;\forall\,\mathbf{x}\in\C
\bigr\}.
\]

For \(p\in\{2,3\}\) and with respect to the symplectic inner product, we call a code \(\C\) \emph{symplectic self-orthogonal}(\emph{symplectic-SO} ) if \(\C\subseteq \C^{\perp_s}\), \emph{symplectic self-dual} (\emph{symplectic-SD} ) if \(\C=\C^{\perp_s}\), and \emph{symplectic-LCD} (linear code with complementary dual) if \(\C\cap \C^{\perp_s}=\{0\}\). Two codes \(\C\) and \(\C'\) are \emph{permutation equivalent} if there exists a permutation of the \(n\) coordinate positions sending \(\C\) to \(\C'\). The elements of \(\C\) are called \emph{codewords}.


\section{Codes over the rings $H_{23}$ and $H_{32}$}\label{sec:rings}

In this paper, we consider two rings which are semi-local non-unital rings of order six. We denote these rings by {\small \[ H_{23} = \langle a,b \mid 2a=0, 3b = 0, a^2=a, b^2 = 0, ab = 0 = ba \rangle,\]}
and
{\small \[H_{32} = \langle a,b \mid 2a=0, 3b = 0, a^2=0, b^2 = b, ab = 0 = ba \rangle. \]}
We also introduce the elements
\[
c = a + b,\quad
d = 2b,\quad
e = a + 2b.
\]
The Table~\ref{tab:add} presents the addition table common to both rings; the multiplication tables for \(H_{23}\) and \(H_{32}\) are shown in Table~\ref{tab:mult}, respectively.  
\begin{table}[h]
\centering
\begin{tabular}{lrrrrrr}
\toprule
$+$ & $0$ & $a$ & $b$ & $c$ & $d$ & $e$ \\
\midrule
$0$ & $0$ & $a$ & $b$ & $c$ & $d$ & $e$ \\
$a$ & $a$ & $0$ & $c$ & $b$ & $e$ & $d$ \\ 
$b$ & $b$ & $c$ & $d$ & $e$ & $0$ & $a$ \\
$c$ & $c$ & $b$ & $e$ & $d$ & $a$ & $0$ \\
$d$ & $d$ & $e$ & $0$ & $a$ & $b$ & $c$ \\
$e$ & $e$ & $d$ & $a$ & $0$ & $c$ & $b$ \\
\bottomrule
\end{tabular}
\caption{Addition table for $H_{23}$ and $H_{32}$.}
\label{tab:add}
\end{table}

\begin{table}[h]
\centering
\begin{minipage}{0.47\linewidth}
\centering
\label{tab:mult}
\begin{tabular}{lrrrrrr}
\toprule
$\cdot$ & $0$ & $a$ & $b$ & $c$ & $d$ & $e$ \\
\midrule
$0$ & $0$ & $0$ & $0$ & $0$ & $0$ & $0$ \\
$a$ & $0$ & $a$ & $0$ & $a$ & $0$ & $a$ \\ 
$b$ & $0$ & $0$ & $0$ & $0$ & $0$ & $0$ \\
$c$ & $0$ & $a$ & $0$ & $a$ & $0$ & $a$ \\
$d$ & $0$ & $0$ & $0$ & $0$ & $0$ & $0$ \\
$e$ & $0$ & $a$ & $0$ & $a$ & $0$ & $a$ \\
\bottomrule
\end{tabular}
\end{minipage}\hfill
\begin{minipage}{0.47\linewidth}
\centering
\begin{tabular}{lrrrrrr}
\toprule
$\cdot$ & $0$ & $a$ & $b$ & $c$ & $d$ & $e$ \\
\midrule
$0$ & $0$ & $0$ & $0$ & $0$ & $0$ & $0$ \\
$a$ & $0$ & $0$ & $0$ & $0$ & $0$ & $0$ \\ 
$b$ & $0$ & $0$ & $b$ & $b$ & $d$ & $d$ \\
$c$ & $0$ & $0$ & $b$ & $b$ & $d$ & $d$ \\
$d$ & $0$ & $0$ & $d$ & $d$ & $b$ & $b$ \\
$e$ & $0$ & $0$ & $d$ & $d$ & $b$ & $b$ \\
\bottomrule
\end{tabular}
\end{minipage}
\caption{Multiplication tables for $H_{23}$ (left) and $H_{32}$ (right).}
\label{tab:mult}
\end{table}
\newpage
Each of \(H_{23}\) and \(H_{32}\) is a commutative ring without unity and with exactly two maximal ideals $J_a = \{0,a\}, \, J_b = \{0,b,d\},$
so that
\[
H_z \;=\; J_a \oplus J_b,
\]
where $z\in\{23,32\}$. Moreover, \(J_a \cong \mathbb{F}_2\) and \(J_b \cong \mathbb{F}_3\),  with the natural \(\F_2\)-action
\[
r\cdot 0 = 0,\quad r\cdot1 = r
\quad(\forall\,r\in J_a),
\]
and similarly \(J_b\cong\mathbb{F}_3\) becomes an \(\F_3\)-module via
\[
r\cdot 0 = 0,\quad r\cdot1 = r,\quad r\cdot2 = 2r
\quad(\forall\,r\in J_b).
\]
Assume \(p\in\{2,3\}\). Then each of \(J_a,J_b\) is naturally an \(\F_p\)-module, i.e.\ for all \(r,s,t\in J_a\) (when \(p=2\)) or \(J_b\) (when \(p=3\)) one has $r\cdot(s\oplus_p t)
\;=\;
r\cdot s\;\oplus_p\;r\cdot t,$ where \(\oplus_p\) denotes addition in \(\F_p\). 
Let \(m\ge1\) be an integer and set \(n=2m\).  A\emph{ linear} code \(\C\subseteq H_z^n\) or\emph{ $H_z$-code}  is  an \(H_z\)-submodule of \(H_z^n\).  Since $H_z \;=\; J_a \;\oplus\; J_b,$ any such code decomposes as
\[
\C
\;=\;
a\,\C_a
\;\oplus\;
b\,\C_b,
\]
where \(\C_a\subseteq\F_2^n\) and \(\C_b\subseteq\F_3^n\) are respectively binary and ternary linear codes.

\begin{remark}\label{rem}
For codewords \(\mathbf{c}_1=a\,\mathbf{x}_1+b\,\mathbf{y}_1\) and \(\mathbf{c}_2=a\,\mathbf{x}_2+b\,\mathbf{y}_2\) in an \(H_z\)-code, with
\(\mathbf{x}_i\in\F_2^{2m}\) and \(\mathbf{y}_i\in\F_3^{2m}\), define the \emph {symplectic inner product} by
\begin{equation}
	\langle \mathbf{c}_1,\mathbf{c}_2\rangle_S
= a^2\,\langle \mathbf{x}_1,\mathbf{x}_2\rangle_s^{(\F_2)}
\;+\;
b^2\,\langle \mathbf{y}_1,\mathbf{y}_2\rangle_s^{(\F_3)}.
\end{equation}
 In particular,
\[
\text{over }H_{23}:\quad \langle \mathbf{c}_1,\mathbf{c}_2\rangle_S
= a\,\langle \mathbf{x}_1,\mathbf{x}_2\rangle_s^{(\F_2)}
\qquad(b^2=0,a^2=a),
\]
\[
\text{over }H_{32}:\quad \langle \mathbf{c}_1,\mathbf{c}_2\rangle_S
= b\,\langle \mathbf{y}_1,\mathbf{y}_2\rangle_s^{(\F_3)}
\qquad(a^2=0,b^2=b).
\]
\end{remark}

The \emph{symplectic dual} of \(\C\) is given by
	\[
	\C^{\perp_s} = \{ \x\in \H_z^{2m} \mid \langle \x,\x'\rangle_s = 0\ \text{for all } \x'\in \C\}.
	\]
	We say \(\C\) is \emph{symplectic self-orthogonal} \emph{(symplectic-SO)}if \(\C \subseteq \C^{\perp_s}\), and \emph{symplectic self-dual} \emph{(symplectic-SD)} if \(\C = \C^{\perp_s}\). An $\H_z$-code that is symplectic self-orthogonal and has size \(6^m\) is called \emph{symplectic quasi self-dual (symplectic-QSD)}. $\C$ is called a \emph{ symplectic nice} if $|\C||\C^{\perp_s}|=36^m$. \emph{symplectic-LCD} ( $\H_z$-code with complementary dual) if \(\C\cap \C^{\perp_s}=\{0\}\). Two  $\H_z$-codes \(\C\) and \(\C'\) are \emph{permutation equivalent} if there exists a permutation of the \(n\) coordinate positions sending \(\C\) to \(\C'\).

	\begin{example}
Let the alphabet be $H_{23}$. Consider the repetition code 
\[
R_2 = \{00,\,aa,\,bb,\,cc,\,dd,\,ee\}.
\]
It is $H_{23}$-linear and symplectic self-orthogonal. Moreover, $|R_2|=6=6^{\,m}\quad(m=1),$
so $R_2$ is \emph{symplectic-QSD}. On the other hand,
\[
R_2^{\perp_S}=\{00,\,0b,\,0d,\,b0,\,bb,\,bd,\,d0,\,db,\,dd,\,
aa,\,ac,\,ae,\,ca,\,cc,\,ce,\,ea,\,ec,\,ee\},
\]
which strictly contains $R_2$, hence $R_2$ is \emph{not} symplectic self-dual.
\end{example}

\noindent
The following example shows that not every symplectic self-orthogonal code is also Euclidean self-orthogonal; see \cite{AAABBSS} for further discussion of Euclidean self-orthogonality.

\begin{example}
Let
\[
\C = \left\{
\begin{array}{llllll}
(0,0,0,0) & (a,0,0,0) & (b,0,0,0) & (c,0,0,0) & (d,0,0,0) & (e,0,0,0)\\
(0,a,0,0) & (a,a,0,0) & (b,a,0,0) & (c,a,0,0) & (d,a,0,0) & (e,a,0,0)\\
(0,b,0,0) & (a,b,0,0) & (b,b,0,0) & (c,b,0,0) & (d,b,0,0) & (e,b,0,0)\\
(0,c,0,0) & (a,c,0,0) & (b,c,0,0) & (c,c,0,0) & (d,c,0,0) & (e,c,0,0)\\
(0,d,0,0) & (a,d,0,0) & (b,d,0,0) & (c,d,0,0) & (d,d,0,0) & (e,d,0,0)\\
(0,e,0,0) & (a,e,0,0) & (b,e,0,0) & (c,e,0,0) & (d,e,0,0) & (e,e,0,0)
\end{array}
\right\}
\subset H_{23}^4.
\]
Then $\C$ is $H_{23}$-code and  symplectic self-orthogonal and $|\C|=2^2\cdot3^2=36$, hence \emph{symplectic-QSD}. 
However, $\C$ is not Euclidean self-orthogonal; for example,
\[
\langle(a,0,0,0),(a,0,0,0)\rangle_E = a \neq 0.
\]
\end{example}

\section{Symplectic Duality }\label{sec:duality}
In this section, we establish several fundamental properties of the symplectic dual of an \(H_z\)-codes. 

\begin{theorem}\label{thmdualH23} 
Let $\C=a\, \C_a \oplus b\, \C_b$ be an $H_{23}$-code of length $n$. Then

\begin{enumerate}
\item  $\C^{\perp_S} = a \, \C_a^{\perp_S}  \oplus b \, \mathbb{F}_3^{n}$.
\item $(\C^{\perp_S})^{\perp_S}=\C$ if and only if $\C_b=\mathbb{F}_3^{n}.$

\end{enumerate}

\end{theorem}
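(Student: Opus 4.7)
The plan is to exploit Remark~\ref{rem}: over $H_{23}$ the symplectic pairing collapses to
\[
\langle \mathbf{c}_1,\mathbf{c}_2\rangle_S \;=\; a\,\langle \mathbf{x}_1,\mathbf{x}_2\rangle_s^{(\F_2)},
\]
because $b^2=0$. Thus the ternary coordinate plays no role whatsoever in the duality condition, and the whole statement should fall out by combining this degeneracy with the non-degeneracy of the symplectic form on $\F_2^{n}$.

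For part (1), I would take an arbitrary element $\mathbf{c} = a\mathbf{x} + b\mathbf{y} \in H_{23}^{n}$ and unpack the condition $\mathbf{c} \in \C^{\perp_S}$. By the Remark, pairing against $\mathbf{c}' = a\mathbf{x}' + b\mathbf{y}' \in \C$ produces $a\,\langle \mathbf{x},\mathbf{x}'\rangle_s^{(\F_2)}$. Since the $\F_2$-scalar $\langle \mathbf{x},\mathbf{x}'\rangle_s^{(\F_2)}$ is attached to $a$ through the module isomorphism $\F_2 \cong J_a$ (sending $1 \mapsto a \neq 0$), vanishing of the pairing in $H_{23}$ is equivalent to vanishing of the binary scalar. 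Hence the condition reduces to $\mathbf{x} \in \C_a^{\perp_S}$, while $\mathbf{y}$ is completely unconstrained and ranges freely over $\F_3^{n}$. Reassembling through the direct-sum decomposition $H_{23} = J_a \oplus J_b$ yields the stated formula.

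For part (2), I would simply apply part (1) twice. The first application gives $\C^{\perp_S} = a\,\C_a^{\perp_S} \oplus b\,\F_3^{n}$. Applying (1) again, with $\C_a^{\perp_S}$ and $\F_3^{n}$ playing the roles of the binary and ternary components, yields
\[
(\C^{\perp_S})^{\perp_S} \;=\; a\,(\C_a^{\perp_S})^{\perp_S} \;\oplus\; b\,\F_3^{n}
\;=\; a\,\C_a \;\oplus\; b\,\F_3^{n},
\]
where the inner identity $(\C_a^{\perp_S})^{\perp_S}=\C_a$ is the standard involutivity of the non-degenerate symplectic form on $\F_2^{n}$. Comparing with $\C = a\,\C_a \oplus b\,\C_b$ and using that the $J_a$- and $J_b$-parts of the decomposition $H_{23} = J_a \oplus J_b$ must agree independently, the equality $(\C^{\perp_S})^{\perp_S}=\C$ is equivalent to $\C_b = \F_3^{n}$.

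The main conceptual point — really the only subtlety — is recognising at the outset that $b^2=0$ makes the symplectic form completely degenerate in the ternary direction, which is what forces the ternary component of the dual to blow up to all of $\F_3^{n}$ regardless of $\C_b$. Once this is noted, the rest is bookkeeping inside the $J_a \oplus J_b$ decomposition together with the classical binary symplectic double-dual identity.
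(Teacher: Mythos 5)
Your proposal is correct and follows essentially the same route as the paper: both use Remark~\ref{rem} to collapse the pairing to $a\,\langle\cdot,\cdot\rangle_s^{(\F_2)}$, deduce part (1) by noting the ternary coordinate is unconstrained, and obtain part (2) by applying part (1) to $\C^{\perp_S}$ together with the binary double-dual identity $(\C_a^{\perp_S})^{\perp_S}=\C_a$. Your explicit appeal to the non-degeneracy of the binary symplectic form is a welcome touch the paper leaves implicit, but it is not a different method.
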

\begin{proof}
Write an arbitrary vector \(\mathbf{y}\in H_{23}^{n}\) as $\mathbf{y} = a\,\mathbf{y}_a + b\,\mathbf{y}_b,
$ where $
\mathbf{y}_a\in\mathbb{F}_2^{n},\;\mathbf{y}_b\in\mathbb{F}_3^{n}.$
Similarly, any codeword in \(\mathcal{C}=a\,\mathcal{C}_a\oplus b\,\mathcal{C}_b\) can be written $\mathbf{c} = a\,\mathbf{u} + b\,\mathbf{v},$ where $\mathbf{u}\in\mathcal{C}_a,\;\mathbf{v}\in\mathcal{C}_b.$
From Remark \ref{rem}, we have 
\[
\langle \mathbf{y},\mathbf{c}\rangle_S
= a\,\langle \mathbf{y}_a,\mathbf{u}\rangle_s^{(\mathbb{F}_2)}.
\]
Hence
\[
\langle \mathbf{y},\mathbf{c}\rangle_S = 0\ \forall\,\mathbf{c}\in\C
\quad\Longleftrightarrow\quad
\langle \mathbf{y}_a,\mathbf{u}\rangle_s^{(\mathbb{F}_2)}=0\ \forall\,\mathbf{u}\in\C_a
\;\Longleftrightarrow\;
\mathbf{y}_a\in\C_a^{\perp_S}.
\]
There is no constraint on \(\mathbf{y}_b\), so
\[
\C^{\perp_S}
= a\,\C_a^{\perp_S}\;\oplus\;b\,\mathbb{F}_3^{n}.
\]
Applying duality again gives
\[
(\mathcal{C}^{\perp_S})^{\perp_S}
= \bigl(a\,\mathcal{C}_a^{\perp_S}\oplus b\,\mathbb{F}_3^{2m}\bigr)^{\perp_S}
= a\,(\mathcal{C}_a^{\perp_S})^{\perp_S}\;\oplus\;b\,\mathbb{F}_3^{2m}
= a\,\mathcal{C}_a\;\oplus\;b\,\mathbb{F}_3^{2m}.
\]
Comparing with \(\mathcal{C}=a\,\mathcal{C}_a\oplus b\,\mathcal{C}_b\) yields
\[
(\mathcal{C}^{\perp_S})^{\perp_S} = \mathcal{C}
\quad\Longleftrightarrow\quad
b\,\mathbb{F}_3^{2m} = b\,\mathcal{C}_b
\quad\Longleftrightarrow\quad
\mathcal{C}_b = \mathbb{F}_3^{2m}.
\]

\end{proof}
\qed
\begin{theorem}\label{thmdualH32} 
Let $\C=a\, \C_a \oplus b\, \C_b$ be an $H_{32}$-code of length $n$. Then

\begin{enumerate}
\item  $\C^{\perp_S} = a \,  \mathbb{F}_2^{n}   \oplus b \, \C_b^{\perp_S}$.
\item $(\C^{\perp_S})^{\perp_S}=\C$ if and only if $\C_a=\mathbb{F}_2^{n}.$

\end{enumerate}

\end{theorem}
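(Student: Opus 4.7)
The plan is to mirror the argument for Theorem~\ref{thmdualH23}, with the roles of the binary and ternary components swapped, since over $H_{32}$ one has $a^2=0$ and $b^2=b$. The key structural difference from the $H_{23}$ case is that the symplectic pairing now annihilates the binary contribution and retains the ternary one, so the defining constraint for membership in $\C^{\perp_S}$ falls on the $b$-component rather than the $a$-component.

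First I would parameterize an arbitrary vector as $\mathbf{y}=a\,\mathbf{y}_a+b\,\mathbf{y}_b$ with $\mathbf{y}_a\in\F_2^n$ and $\mathbf{y}_b\in\F_3^n$, and a generic codeword as $\mathbf{c}=a\,\mathbf{u}+b\,\mathbf{v}$ with $\mathbf{u}\in\C_a$ and $\mathbf{v}\in\C_b$. Invoking the $H_{32}$ case of Remark~\ref{rem} yields
\[
\langle\mathbf{y},\mathbf{c}\rangle_S \;=\; b\,\langle\mathbf{y}_b,\mathbf{v}\rangle_s^{(\F_3)}.
\]
For part (1), I would then observe that $\mathbf{y}\in\C^{\perp_S}$ is equivalent to $\langle\mathbf{y}_b,\mathbf{v}\rangle_s^{(\F_3)}=0$ for every $\mathbf{v}\in\C_b$, i.e.\ $\mathbf{y}_b\in\C_b^{\perp_s}$, while $\mathbf{y}_a$ is entirely unconstrained. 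This gives $\C^{\perp_S}=a\,\F_2^n\oplus b\,\C_b^{\perp_s}$ at once.

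For part (2), I would iterate part (1): applying the formula just derived to the $H_{32}$-code $\C^{\perp_S}=a\,\F_2^n\oplus b\,\C_b^{\perp_s}$ produces
\[
(\C^{\perp_S})^{\perp_S} \;=\; a\,\F_2^n \oplus b\,(\C_b^{\perp_s})^{\perp_s} \;=\; a\,\F_2^n \oplus b\,\C_b,
\]
using the standard double-dual identity for the ternary symplectic pairing. Comparing with $\C=a\,\C_a\oplus b\,\C_b$ and matching $a$-components forces $\C_a=\F_2^n$, and this condition is also clearly sufficient.

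I do not anticipate any genuine obstacle: the derivation is the structural dual of the proof of Theorem~\ref{thmdualH23}. The only point requiring care is to invoke the correct multiplication rules of $H_{32}$, namely $a^2=0$ and $b^2=b$, so that the binary part of the pairing vanishes and the ternary one survives; this reversal is precisely what interchanges which component of $\C$ appears in $\C^{\perp_S}$ and which one governs double-dual reflexivity.
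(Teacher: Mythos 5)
Your proposal is correct and follows essentially the same route as the paper, which simply invokes symmetry with Theorem~\ref{thmdualH23} (swapping $a$ and $b$ and using the $H_{32}$ case of Remark~\ref{rem}); you have merely written out the details that the paper leaves implicit. In fact your treatment of part (2) is slightly cleaner than the paper's displayed chain, since you apply part (1) to $\C^{\perp_S}$ and then compare $a$-components, rather than writing an equality that already presupposes $\C_a=\F_2^n$.
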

\begin{proof}
By symmetry with Theorem~\ref{thmdualH23}, swapping  \(a\) and \(b\) and using Remark \ref{rem}
for \(H_{32}\), one immediately obtains $\C^{\perp_S}
= a\,\F_2^n \;\oplus\; b\,\C_b^{\perp_S},$
and dualizing again gives
\[
(\C^{\perp_S})^{\perp_S}
= a\,(\F_2^n)^{\perp_S}\;\oplus\; b\,(\C_b^{\perp_S})^{\perp_S}
= a\,\C_a\;\oplus\; b\,\C_b=\C,
\]
whence \((\C^{\perp_S})^{\perp_S}=\C\) exactly when \(\C_a=\F_2^n\).
\end{proof}

\qed


\medskip

\noindent  Combining the duality formulas of Theorems~\ref{thmdualH23} and~\ref{thmdualH32} with the definition of symplectic nice codes yields the following two corollaries.

\begin{corollary}\label{nice1} 
Let $\C=a\, \C_a \oplus b\, \C_b$ be an $H_{23}$-code of length $n=2m$. 
Then $\C$ is symplectic nice if and only if $\C_b=\{\mathbf{0}\}$.

\end{corollary}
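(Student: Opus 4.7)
The plan is to compute both sides of the equation $|\C|\,|\C^{\perp_S}|=36^m$ directly using Theorem~\ref{thmdualH23}, and see what forces them to be equal.

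First I would write $k_a=\dim_{\F_2}\C_a$ and $k_b=\dim_{\F_3}\C_b$. Because $\C=a\C_a\oplus b\C_b$ is an internal direct sum and the two summands live in disjoint additive components of $H_{23}^n$, the cardinality is simply $|\C|=2^{k_a}\,3^{k_b}$. By Theorem~\ref{thmdualH23}(1), $\C^{\perp_S}=a\C_a^{\perp_S}\oplus b\,\F_3^n$, and since $\C_a^{\perp_S}$ is the ordinary symplectic dual of a binary code of length $n=2m$, it has dimension $n-k_a$. Therefore $|\C^{\perp_S}|=2^{n-k_a}\,3^n$.

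Next I would multiply:
\[
|\C|\,|\C^{\perp_S}|=2^{k_a}\,3^{k_b}\cdot 2^{n-k_a}\,3^n=2^n\,3^{\,n+k_b}.
\]
On the other hand $36^m=(2^2\cdot 3^2)^m=2^{2m}\,3^{2m}=2^n\,3^n$. So the symplectic-nice condition reduces to $3^{n+k_b}=3^n$, i.e.\ $k_b=0$, which is equivalent to $\C_b=\{\mathbf{0}\}$. Conversely, if $\C_b=\{\mathbf{0}\}$, the same computation shows $|\C|\,|\C^{\perp_S}|=2^n\,3^n=36^m$.

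There is no real obstacle here; the only thing to be slightly careful about is that the formula $|\C_a^{\perp_S}|=2^{n-k_a}$ uses the standard fact that the symplectic form on $\F_2^{2m}$ is non-degenerate, so that $\dim\C_a+\dim\C_a^{\perp_S}=n$. This is a property of codes over the field $\F_2$ and was recorded implicitly in Section~\ref{sec:1}; it is what makes the ternary side contribute the ``extra'' factor $3^{k_b}$ that must be killed for niceness.
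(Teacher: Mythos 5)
Your proposal is correct and follows essentially the same route as the paper's proof: both apply Theorem~\ref{thmdualH23}(1) to get $\C^{\perp_S}=a\,\C_a^{\perp_S}\oplus b\,\F_3^n$, invoke the non-degeneracy fact $\dim\C_a+\dim\C_a^{\perp_S}=n$ for the binary symplectic dual, and reduce niceness to the cardinality count $|\C|\,|\C^{\perp_S}|=36^m\,|\C_b|$, forcing $\C_b=\{\mathbf{0}\}$. The only difference is cosmetic: you track dimensions $k_a,k_b$ explicitly where the paper works directly with cardinalities.
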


\begin{proof}
From Theorem~\ref{thmdualH23} we have $|\C^{\perp_S}|=|\C_a^{\perp_S}|\;3^n.$
By standard symplectic‐duality over \(\F_2^n\), $\dim(\C_a) + \dim\bigl(\C_a^{\perp_S}\bigr) = n,$
hence $|\C_a|\;|\C_a^{\perp_S}|
=2^{\dim(\C_a)}\;2^{\dim(\C_a^{\perp_S})}
=2^n.$
Therefore
\[
|\C|\;|\C^{\perp_S}|
=2^n\;\cdot\;3^n\;\cdot\;|\C_b|
=6^n\;|\C_b|
=36^m\;|\C_b|.
\]
It follows that \(\C\) is symplectic nice, i.e.\ \(|\C|\;|\C^{\perp_S}|=36^m\), if and only if \(|\C_b|=1\), if and only if \(\C_b=\{\mathbf0\}\).
\end{proof}

\qed

\begin{corollary}\label{nice2} 
Let $\C=a\, \C_a \oplus b\, \C_b$ be an $H_{32}$-code of length $n=2m$. 
Then $\C$ is symplectic nice if and only if $\C_a=\{\mathbf{0}\}$.

\end{corollary}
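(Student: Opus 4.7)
The plan is to mirror the proof of Corollary~\ref{nice1} almost verbatim, using the symmetry between $H_{23}$ and $H_{32}$ that is already apparent in the two duality theorems. First I would apply Theorem~\ref{thmdualH32} to write $\C^{\perp_S} = a\,\F_2^{n} \oplus b\,\C_b^{\perp_S}$, so that the cardinality factors as $|\C^{\perp_S}| = 2^{n}\,|\C_b^{\perp_S}|$, with the full $\F_2^{n}$-factor now sitting on the $a$-side rather than the $b$-side as it did in the $H_{23}$ case.

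Next I would invoke the standard fact that the symplectic inner product on $\F_3^{n}$ is non-degenerate, so that $\dim_{\F_3}(\C_b) + \dim_{\F_3}(\C_b^{\perp_S}) = n$ and hence $|\C_b|\,|\C_b^{\perp_S}| = 3^{n}$. Multiplying $|\C| = |\C_a|\,|\C_b|$ by $|\C^{\perp_S}| = 2^{n}\,|\C_b^{\perp_S}|$ and collecting factors then yields
\[
|\C|\,|\C^{\perp_S}| \;=\; |\C_a|\cdot |\C_b|\cdot |\C_b^{\perp_S}|\cdot 2^{n} \;=\; |\C_a|\cdot 2^{n}\cdot 3^{n} \;=\; |\C_a|\cdot 36^{m}.
\]

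Finally, comparing with the defining identity $|\C|\,|\C^{\perp_S}| = 36^{m}$ of a symplectic nice code shows the equivalence with $|\C_a| = 1$, i.e.\ $\C_a = \{\mathbf{0}\}$. No genuine obstacle arises here: the whole argument is a direct transcription of the proof of Corollary~\ref{nice1} under the involution $(a \leftrightarrow b,\ \F_2 \leftrightarrow \F_3)$ that swaps the roles of the two maximal ideals of $H_z$, and all cardinality bookkeeping remains identical up to this relabeling.
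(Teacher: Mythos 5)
Your proposal is correct and is essentially the paper's own argument: the paper proves Corollary~\ref{nice2} by repeating the cardinality computation of Corollary~\ref{nice1} with Theorem~\ref{thmdualH32} in place of Theorem~\ref{thmdualH23}, exactly as you do. Your bookkeeping $|\C|\,|\C^{\perp_S}| = |\C_a|\cdot 2^{n}\cdot 3^{n} = |\C_a|\cdot 36^{m}$ and the conclusion $|\C_a|=1 \Leftrightarrow \C_a=\{\mathbf{0}\}$ match the intended proof under the stated $(a\leftrightarrow b,\ \F_2\leftrightarrow\F_3)$ symmetry.
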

\begin{proof}
Follows by the same argument as Corollary~\ref{nice1}, with Theorem~\ref{thmdualH32} replacing Theorem~\ref{thmdualH23}.
\end{proof}

\qed



\begin{theorem}\label{thmorthH23} 
Let $\C=a\, \C_a \oplus b\, \C_b$ be an $H_{z}$-code. then

\begin{enumerate}
\item if $z=23$, then, $\C$ is symplectic self-orthogonal if and only if $\C_a$ is a binary symplectic self-orthogonal code.
\item if $z=32$, then,$\C$ is symplectic self-orthogonal if and only if $\C_b$ is a ternary symplectic self-orthogonal code.
\end{enumerate}
\end{theorem}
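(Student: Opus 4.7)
The plan is to reduce symplectic self-orthogonality of $\C$ to a component-wise condition by combining the decomposition $\C = a\,\C_a \oplus b\,\C_b$ with the explicit forms of $\C^{\perp_S}$ provided by Theorems~\ref{thmdualH23} and~\ref{thmdualH32}. Since $H_z = J_a \oplus J_b$ with $J_a \cong \F_2$ and $J_b \cong \F_3$ of coprime orders, every element of $H_z^n$ has a unique decomposition into its $a$-part and its $b$-part. Consequently, any containment between direct sums of the form $a\,A \oplus b\,B \subseteq a\,A' \oplus b\,B'$ with $A,A' \subseteq \F_2^n$ and $B,B' \subseteq \F_3^n$ is equivalent to the simultaneous componentwise containments $A \subseteq A'$ and $B \subseteq B'$.

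For part (1), assume $z=23$. By Theorem~\ref{thmdualH23}, $\C^{\perp_S} = a\,\C_a^{\perp_S} \oplus b\,\F_3^{n}$. Projecting the inclusion $\C \subseteq \C^{\perp_S}$ onto each component yields the pair $\C_a \subseteq \C_a^{\perp_S}$ and $\C_b \subseteq \F_3^{n}$. The second containment is automatic, so the overall condition reduces exactly to $\C_a$ being a binary symplectic self-orthogonal code.

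For part (2), assume $z=32$. By Theorem~\ref{thmdualH32}, $\C^{\perp_S} = a\,\F_2^{n} \oplus b\,\C_b^{\perp_S}$. The same projection argument gives $\C_a \subseteq \F_2^{n}$ (automatic) together with $\C_b \subseteq \C_b^{\perp_S}$, so self-orthogonality of $\C$ is equivalent to $\C_b$ being a ternary symplectic self-orthogonal code.

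The only mildly delicate point — really a bookkeeping matter — is justifying that containment between direct sums of the shape $a(\cdot) \oplus b(\cdot)$ can be read off component by component; this uses that $a\,\F_2^n$ and $b\,\F_3^n$ intersect trivially and that multiplication by $a$ (resp.\ $b$) acts as the identity on $J_a \cong \F_2$ (resp.\ $J_b \cong \F_3$). Should this feel unsatisfactory, one can bypass the duality theorems entirely and argue directly from Remark~\ref{rem}: for $H_{23}$, $\langle \mathbf{c}_1,\mathbf{c}_2\rangle_S = a\,\langle \mathbf{x}_1,\mathbf{x}_2\rangle_s^{(\F_2)}$, which vanishes in $H_{23}$ precisely when $\langle \mathbf{x}_1,\mathbf{x}_2\rangle_s^{(\F_2)} = 0$ in $\F_2$, with the ternary component contributing nothing because $b^2=0$; the case $z=32$ is symmetric, using $a^2=0$ and $b^2=b$. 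Quantifying over all codeword pairs then yields the stated equivalences.
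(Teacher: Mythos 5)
Your proposal is correct and follows essentially the same route as the paper: both invoke Theorems~\ref{thmdualH23} and~\ref{thmdualH32} to write $\C^{\perp_S}$ explicitly and then reduce the containment $\C\subseteq\C^{\perp_S}$ to the componentwise conditions, where the ternary (resp.\ binary) containment is automatic. Your added justification of the componentwise reading of the containment, and the fallback direct computation via Remark~\ref{rem}, are sound refinements of the same argument rather than a different approach.
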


\begin{proof}
Write \(\C=a\,\C_a\oplus b\,\C_b\subseteq H_z^n\).
\begin{enumerate}
\item \medskip
\noindent\textbf{Case \(z=23\).} By Theorem~\ref{thmdualH23}, we have $\C^{\perp_S}
= a\,\C_a^{\perp_S}\;\oplus\;b\,\mathbb{F}_3^{2m}.$
Hence
\[
\C\subseteq \C^{\perp_S}
\;\Longleftrightarrow\;
a\,\C_a\oplus b\,\C_b
\;\subseteq\;
a\,\C_a^{\perp_S}\oplus b\,\mathbb{F}_3^{2m}
\;\Longleftrightarrow\;
a\,\C_a\subseteq a\,\C_a^{\perp_S},
\]
  But
\[
a\,\C_a\subseteq a\,\C_a^{\perp_S}
\quad\Longleftrightarrow\quad
\C_a\subseteq \C_a^{\perp_S},
\]
so \(\C\) is symplectic self-orthogonal precisely when \(\C_a\) is symplectic self-orthogonal code over \(\mathbb{F}_2\).  
\item \textbf{Case \(z=32\).} An entirely analogous argument, using Theorem~\ref{thmdualH32} in place of Theorem~\ref{thmdualH23}, shows
\[
\C\subseteq\C^{\perp_S}
\iff
\C_b\subseteq\C_b^{\perp_S}.
\]
\end{enumerate}
\end{proof}

\qed

\par In the next two theorems, we establish a method to construct symplectic self-orthogonal codes and symplectic-QSD codes from binary and ternary linear codes. 
\begin{theorem}\label{thmbinaryternary1} Let $\C_1$ be a binary linear code of length $n=2m$ and $\C_2$ be a ternary linear code of length $n=2m$. The code $\C$ defined by
\[\C=a\,\C_1\oplus b\,\C_2\] 
is a symplectic self-orthogonal $H_{23}$-code of length $n$ where $\C_a=\C_1$ and $\C_b=\C_2$ if $\C_1$ is a binary symplectic  self-orthogonal code. Moreover, if $|\C_1||\C_2|=  6^{m}$, then $\C$ is symplectic-QSD.
\end{theorem}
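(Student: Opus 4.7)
The plan is to verify three claims in order: (i) $\C=a\,\C_1\oplus b\,\C_2$ is a well-defined $H_{23}$-submodule of $H_{23}^n$ whose canonical binary and ternary components are $\C_1$ and $\C_2$ respectively; (ii) $\C$ is symplectic self-orthogonal under the stated hypothesis; and (iii) the cardinality assumption forces $\C$ to be symplectic-QSD.

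For (i), I would use that $H_{23}=J_a\oplus J_b$ as abelian groups, which makes the sum $a\,\C_1\oplus b\,\C_2$ a well-defined additive subgroup of $H_{23}^n$. Closure under the $H_{23}$-action follows from the relations $a^2=a$, $b^2=0$, $ab=ba=0$: any scalar $r=\alpha a+\beta b\in H_{23}$ sends $a\mathbf{u}+b\mathbf{v}$ to $a(\alpha\mathbf{u})$, which lies in $a\,\C_1$ because $\C_1$ is $\F_2$-linear while the contributions involving $b$ vanish. Matching this against the canonical decomposition $\C=a\,\C_a\oplus b\,\C_b$ from Section~\ref{sec:rings} then reads off $\C_a=\C_1$ and $\C_b=\C_2$ by inspection.

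For (ii) and (iii), the work is short. Claim (ii) follows by direct invocation of Theorem~\ref{thmorthH23}(1): with $z=23$ and $\C_a=\C_1$ a binary symplectic self-orthogonal code by hypothesis, that theorem yields $\C\subseteq\C^{\perp_S}$. For (iii), I would compute $|\C|=|\C_1|\cdot|\C_2|$ using the internal-direct-sum structure of $a\,\C_1\oplus b\,\C_2$ together with the fact that the maps $\mathbf{u}\mapsto a\mathbf{u}$ on $\F_2^n$ and $\mathbf{v}\mapsto b\mathbf{v}$ on $\F_3^n$ are bijections onto their images, via the group isomorphisms $J_a\cong\F_2$ and $J_b\cong\F_3$. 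The hypothesis $|\C_1||\C_2|=6^{m}$ then gives $|\C|=6^{m}$, which combined with (ii) is exactly the defining condition for symplectic-QSD. There is no serious obstacle here: the whole argument reduces to bookkeeping plus one invocation of Theorem~\ref{thmorthH23}, with the only subtlety being the cardinality verification in the non-unital setting, where one must explicitly check injectivity of scalar multiplication by $a$ and by $b$ on the respective component spaces.
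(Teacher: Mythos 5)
Your proposal is correct, and its skeleton (module closure, then self-orthogonality, then the cardinality count) matches the paper's proof; the one genuine difference is how self-orthogonality is obtained. The paper proves it directly: writing $\mathbf{c}=a\,\mathbf{u}+b\,\mathbf{v}$ and $\mathbf{c}'=a\,\mathbf{u}'+b\,\mathbf{v}'$ and using $a^2=a$, $ab=ba=0$, $b^2=0$, it computes $\langle\mathbf{c},\mathbf{c}'\rangle_S=a\,\langle\mathbf{u},\mathbf{u}'\rangle_s^{(\F_2)}=0$, which keeps the argument self-contained. You instead invoke Theorem~\ref{thmorthH23}(1); this is legitimate, since that theorem precedes the present one and rests only on Theorem~\ref{thmdualH23}, so there is no circularity --- but it applies only once you know that $\C$ is an $H_{23}$-code whose canonical components are $\C_a=\C_1$ and $\C_b=\C_2$, which is precisely why your part (i) must come first; you handle that dependency correctly. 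Your route is shorter and reuses the duality machinery, while the paper's direct computation buys independence from the earlier characterization. A smaller point in your favor: your description of the scalar action is more accurate than the paper's. Since $b^2=0$ in $H_{23}$, the ring action of $r=\alpha a+\beta b$ gives $r(a\,\mathbf{u}+b\,\mathbf{v})=a(\alpha\,\mathbf{u})$, with the $b$-contributions annihilated, exactly as you say; the paper instead writes $r\,\mathbf{x}=a(r_a\mathbf{u})+b(r_b\mathbf{v})$, asserting that $r_b$ acts on $b\,\C_2$ by $\F_3$-scalar multiplication, which holds in $H_{32}$ but not in $H_{23}$. The discrepancy is harmless for the conclusion (both expressions lie in $\C$), but your version is the correct computation. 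The cardinality step, including the injectivity of $\mathbf{u}\mapsto a\,\mathbf{u}$ and $\mathbf{v}\mapsto b\,\mathbf{v}$, is the same in both proofs.
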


\begin{proof}
Write an arbitrary pair of codewords in \(\C\) as $\mathbf{x}
= a\,\mathbf{u} + b\,\mathbf{v}$, and $\mathbf{x}' = a\,\mathbf{u}' + b\,\mathbf{v}',$
with \(\mathbf{u},\mathbf{u}'\in\C_1\) and \(\mathbf{v},\mathbf{v}'\in\C_2\).  Since \(\C_1\) is linear over \(\mathbb{F}_2\) and \(\C_2\) is linear over \(\mathbb{F}_3\), we have $\mathbf{u} + \mathbf{u}' \in \C_1,$ and $\mathbf{v} + \mathbf{v}' \in \C_2$.
Hence
\[
\mathbf{x} + \mathbf{x}'
= a\,(\mathbf{u} + \mathbf{u}') \;+\; b\,(\mathbf{v} + \mathbf{v}')
\;\in\;
a\,\C_1 \;\oplus\; b\,\C_2
= \C,
\]
showing closure under addition. Next, let \(r\in H_{23}\).  By the decomposition \(H_{23}=J_a\oplus J_b\) we write \(r=r_a+r_b\) with \(r_a\in J_a\cong\mathbb{F}_2\) and \(r_b\in J_b\cong\mathbb{F}_3\).  The action of \(r_a\) on the summand \(a\,\C_1\) coincides with the \(\mathbb{F}_2\)-scalar multiplication on \(\C_1\), and the action of \(r_b\) on \(b\,\C_2\) coincides with the \(\mathbb{F}_3\)-scalar multiplication on \(\C_2\).  Therefore
\[
r\,\mathbf{x}
= r\,(a\,\mathbf{u}) \;+\; r\,(b\,\mathbf{v})
= a\,(r_a\,\mathbf{u}) \;+\; b\,(r_b\,\mathbf{v}),
\]
with \(r_a\,\mathbf{u}\in\C_1\) and \(r_b\,\mathbf{v}\in\C_2\).  It follows that \(r\,\mathbf{x}\in\C\), so \(\C\) is closed under \(H_{23}\)-multiplication.

Together, these show \(\C\) is  a linear \(H_{23}\)-code. To see that \(\C\) is symplectic self-orthogonal, Let $\mathbf{c}=a\,\mathbf{u}+b\,\mathbf{v}$, and $\mathbf{c}'=a\,\mathbf{u}'+b\,\mathbf{v}'$
be arbitrary codewords of \(\C\).  Then using \(a^2=a\), \(ab=ba=0\), \(b^2=0\) in \(H_{23}\) one computes
\[
\bigl\langle \mathbf{c},\mathbf{c}'\bigr\rangle_S
=\bigl\langle a\,\mathbf{u}+b\,\mathbf{v},\,a\,\mathbf{u}'+b\,\mathbf{v}'\bigr\rangle_S
= a\,\bigl\langle \mathbf{u},\mathbf{u}'\bigr\rangle_s^{(\F_2)}.
\]
Since \(\C_1\) is symplectic self-orthogonal over \(\F_2\), \(\langle \mathbf{u},\mathbf{u}'\rangle_s^{(\F_2)}=0\).  Hence \(\langle \mathbf{c},\mathbf{c}'\rangle_S=0\) for all \(\mathbf{c},\mathbf{c}'\in\C\), i.e.\ \(\C\subseteq\C^{\perp_S}\).
Moreover, since \(\lvert\C\rvert=\lvert\C_1\rvert\cdot\lvert\C_2\rvert\), the condition
\(\lvert\C_1\rvert\,\lvert\C_2\rvert=6^m\) gives \(\lvert\C\rvert=6^m\).   Hence \(\C\) is symplectic-QSD.
\end{proof}
\qed
The following result for \(H_{32}\) follows by the same  argument as Theorem~\ref{thmbinaryternary} and is stated without proof.

\begin{theorem}\label{thmbinaryternary2} Let $\C_1$ be a binary linear code of length $n=2m$ and $\C_2$ be a ternary linear code of length $n=2m$. The code $\C$ defined by
\[\C=a\,\C_1\oplus b\,\C_2\] 
is a symplectic self-orthogonal $H_{32}$-code of length $n$ where $\C_a=\C_1$ and $\C_b=\C_2$ if $\C_2$ is a ternary symplectic  self-orthogonal code. Moreover, if $|\C_1||\C_2|=  6^{m}$, then $\C$ is symplectic-QSD.
\end{theorem}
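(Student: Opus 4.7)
The plan is to mirror the proof of Theorem~\ref{thmbinaryternary1} with the roles of $a$ and $b$ swapped, exploiting the fact that in $H_{32}$ the non-zero square is $b^2=b$ while $a^2=0$. This asymmetry is already visible in Remark~\ref{rem}, where the symplectic inner product over $H_{32}$ collapses to $b\,\langle \mathbf{y}_1,\mathbf{y}_2\rangle_s^{(\F_3)}$ rather than to the binary bracket, so it is the ternary factor that has to carry all of the self-orthogonality.

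First I would check that $\C=a\,\C_1\oplus b\,\C_2$ is an $H_{32}$-submodule of $H_{32}^n$. Closure under addition is immediate from the $\F_2$-linearity of $\C_1$ and the $\F_3$-linearity of $\C_2$, since sums in $H_{32}^n$ decompose along the direct summands $a\,\C_1$ and $b\,\C_2$. Closure under $H_{32}$-scalar multiplication uses the decomposition $H_{32}=J_a\oplus J_b$: any $r\in H_{32}$ splits as $r=r_a+r_b$ with $r_a\in J_a\cong\F_2$ and $r_b\in J_b\cong\F_3$, and the $H_{32}$-multiplication table then gives $r\cdot(a\mathbf{u}+b\mathbf{v})=a\,(r_a\mathbf{u})+b\,(r_b\mathbf{v})\in\C$.

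Next, for two arbitrary codewords $\mathbf{c}=a\mathbf{u}+b\mathbf{v}$ and $\mathbf{c}'=a\mathbf{u}'+b\mathbf{v}'$ of $\C$, I would invoke Remark~\ref{rem} to reduce $\langle \mathbf{c},\mathbf{c}'\rangle_S$ to $b\,\langle \mathbf{v},\mathbf{v}'\rangle_s^{(\F_3)}$. The hypothesis that $\C_2$ is symplectic self-orthogonal over $\F_3$ then forces the ternary bracket to vanish for all $\mathbf{v},\mathbf{v}'\in\C_2$, whence $\langle \mathbf{c},\mathbf{c}'\rangle_S=0$ and $\C\subseteq\C^{\perp_S}$.

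For the QSD assertion the direct sum gives $|\C|=|\C_1|\cdot|\C_2|$, so the hypothesis $|\C_1|\,|\C_2|=6^m$ immediately yields $|\C|=6^m$, matching the size requirement in the definition of symplectic-QSD. I do not foresee any genuine obstacle: the argument is essentially bookkeeping, and the only point requiring a little care is the swap in which component must be self-orthogonal, which is forced by Remark~\ref{rem} together with the identities $a^2=0$ and $b^2=b$ in $H_{32}$.
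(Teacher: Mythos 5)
Your proposal is correct and follows essentially the paper's intended argument: the paper states Theorem~\ref{thmbinaryternary2} without proof, remarking that it follows from the proof of Theorem~\ref{thmbinaryternary1} by symmetry, and your write-up is precisely that symmetric argument (linearity, then Remark~\ref{rem} reducing the inner product to $b\,\langle \mathbf{v},\mathbf{v}'\rangle_s^{(\F_3)}$, then the size count for the QSD claim). One small correction to your module-closure step: in $H_{32}$ the element $a$ annihilates the entire ring (from the multiplication table, $ra=0$ for every $r\in H_{32}$, since $a^2=0$ and $ba=0$), so the correct identity is $r\cdot(a\mathbf{u}+b\mathbf{v})=b\,(r_b\mathbf{v})$ rather than $a\,(r_a\mathbf{u})+b\,(r_b\mathbf{v})$; the $a$-component of $r\mathbf{x}$ is simply $\mathbf{0}$, which still lies in $a\,\C_1\oplus b\,\C_2$, so closure — and hence the rest of your proof — goes through unchanged.
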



\begin{lemma}\label{H23QSD}
Let $\C=a\, \C_a \oplus b\, \C_b$ be an $H_{23}$-code of length $n=2m$. Then  $\C$ is a symplectic-QSD code  if and only if  $\C_a$ is a binary symplectic self-dual code and $\C_b$ is a ternary $[n,m]$ code.	
\end{lemma}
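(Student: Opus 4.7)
The plan is to unpack the definition of symplectic-QSD and split it into its two requirements: that $\C$ be symplectic self-orthogonal, and that $|\C|=6^m$. The first requirement can be converted into a condition on $\C_a$ alone by invoking Theorem~\ref{thmorthH23}(1), which for $z=23$ says that $\C$ is symplectic self-orthogonal if and only if $\C_a$ is binary symplectic self-orthogonal. The second requirement is purely about sizes, and since $\C = a\,\C_a \oplus b\,\C_b$, we have $|\C| = |\C_a|\cdot|\C_b| = 2^{\dim_{\F_2}\C_a}\cdot 3^{\dim_{\F_3}\C_b}$.

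Next, I would equate $2^{\dim\C_a}\,3^{\dim\C_b}=6^m=2^m\,3^m$ and invoke unique factorization in $\mathbb{Z}$ to conclude $\dim_{\F_2}\C_a = m$ and $\dim_{\F_3}\C_b = m$. The ternary side is then immediate: $\C_b$ must be an $[n,m]$ code, and conversely any $[n,m]$ ternary code contributes the correct $3^m$ factor. On the binary side, I would combine $\dim\C_a = m$ with the self-orthogonality $\C_a \subseteq \C_a^{\perp_s}$; since the symplectic form on $\F_2^{2m}$ is non-degenerate, $\dim\C_a + \dim\C_a^{\perp_s}=2m$, so $\dim\C_a^{\perp_s}=m=\dim\C_a$, forcing $\C_a = \C_a^{\perp_s}$, i.e.\ $\C_a$ is binary symplectic self-dual. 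The converse direction is essentially a recap: if $\C_a$ is symplectic self-dual then in particular it is self-orthogonal, so Theorem~\ref{thmorthH23}(1) gives $\C \subseteq \C^{\perp_S}$, and the size computation $|\C|=2^m\cdot 3^m=6^m$ certifies QSD.

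There is no serious obstacle here; the argument is a direct assembly of Theorem~\ref{thmorthH23} with a cardinality count. The only point that merits a careful sentence is the non-degeneracy of the symplectic pairing on $\F_2^{2m}$, which is what lets self-orthogonality plus the dimension bound $m$ upgrade to genuine self-duality; this is a standard fact about symplectic forms, but I would state it explicitly so that the implication ``self-orthogonal of size $2^m$ $\Rightarrow$ self-dual'' is not left implicit.
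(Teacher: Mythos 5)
Your proposal is correct and follows essentially the same route as the paper's proof: both split the QSD condition into self-orthogonality (reduced to $\C_a$ via the duality/orthogonality results for $H_{23}$) plus the cardinality count $|\C_a|\,|\C_b|=2^{\dim\C_a}3^{\dim\C_b}=6^m$, and both upgrade self-orthogonality with $\dim\C_a=m$ to self-duality using the fact that a symplectic self-orthogonal binary code in $\F_2^{2m}$ has dimension at most $m$ with equality exactly in the self-dual case. Your explicit appeal to non-degeneracy of the form (so that $\dim\C_a+\dim\C_a^{\perp_s}=2m$) is the same fact the paper invokes, just stated more carefully.
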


\begin{proof}
First suppose \(\C\) is symplectic-QSD of length \(2m\) and using Theorem~\ref{thmdualH23} gives \(\C\subseteq\C^{\perp_S}\) if and only if \(\C_a\subseteq\C_a^{\perp_S}\), so \(\C_a\) is symplectic self-orthogonal over \(\mathbb F_2\).  Moreover, $|\C|=|\C_a|\;|\C_b|=6^m.$ Every symplectic self-orthogonal binary code \(\C_a\subseteq\mathbb F_2^{2m}\) satisfies \(|\C_a|\le2^m\), with equality if and only if \(\C_a=\C_a^{\perp_S}\).  Since \(|\C_a|\) divides \(6^m\), the only possibility is \(|\C_a|=2^m\).  Hence \(\dim\C_a=m\) and \(\C_a=\C_a^{\perp_S}\), i.e.\ \(\C_a\) is symplectic self-dual.  Likewise, \(|\C_b|=6^m/2^m=3^m\), so \(\C_b\) has dimension \(m\) over \(\mathbb F_3\), i.e.\ is a ternary \([2m,m]\) code.

Conversely, if \(\C_a\) is symplectic self-dual of length \(2m\) (so \(|\C_a|=2^m\)) and \(\C_b\) is a ternary \([2m,m]\) code (so \(|\C_b|=3^m\)), then $\C$ is symplectic self-orthogonal (since \(\C_a\subseteq\C_a^{\perp_S}\)) and $|\C|
=|\C_a|\;|\C_b|
=2^m\cdot3^m
=6^m.$ Thus \(\C\) is symplectic-QSD.  
\end{proof}


\begin{lemma}\label{H32QSD}
Let $\C=a\, \C_a \oplus b\, \C_b$ be an $H_{32}$-code of length $n=2m$. Then  $\C$ is a symplectic-QSD code  if and only if  $\C_a$ is  is a binary $[n,m]$ code  and $\C_b$ is a ternary symplectic self-dual code.
\end{lemma}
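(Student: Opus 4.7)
The plan is to mirror the proof of Lemma~\ref{H23QSD}, interchanging the roles of the binary and ternary components and invoking the $H_{32}$-versions of the earlier results. Write $\C=a\,\C_a\oplus b\,\C_b\subseteq H_{32}^{2m}$ throughout.

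For the forward direction, assume $\C$ is symplectic-QSD. First I would apply Theorem~\ref{thmorthH23} in the case $z=32$ to conclude that symplectic self-orthogonality of $\C$ is equivalent to $\C_b\subseteq \C_b^{\perp_S}$, so $\C_b$ is a ternary symplectic self-orthogonal code. The size condition then reads $|\C_a|\,|\C_b|=6^m=2^m\cdot 3^m$. Since $|\C_a|$ is a power of $2$ (as $\C_a\subseteq\F_2^{2m}$) and $|\C_b|$ is a power of $3$ (as $\C_b\subseteq\F_3^{2m}$), coprimality forces $|\C_a|=2^m$ and $|\C_b|=3^m$. Next I would invoke the standard bound that any symplectic self-orthogonal code in $\F_3^{2m}$ satisfies $|\C_b|\le 3^m$, with equality precisely when $\C_b=\C_b^{\perp_S}$. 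Thus $\C_b$ is a ternary symplectic self-dual code, and $\C_a$ is a binary $[2m,m]$-code with no further restriction.

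For the converse, suppose $\C_a$ is any binary $[2m,m]$-code and $\C_b$ is a ternary symplectic self-dual code of length $2m$. Then $\C_b\subseteq\C_b^{\perp_S}$, so Theorem~\ref{thmorthH23}(2) yields $\C\subseteq\C^{\perp_S}$; and $|\C|=|\C_a|\,|\C_b|=2^m\cdot 3^m=6^m$, so $\C$ is symplectic-QSD.

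The only delicate point is the coprimality argument pinning down $|\C_a|=2^m$ and $|\C_b|=3^m$ from the single equation $|\C_a|\,|\C_b|=6^m$, together with the observation that symplectic self-orthogonality of $\C_b$ over $\F_3$ caps $|\C_b|$ at $3^m$ and forces self-duality at that extremal size; once this is in place the rest is a direct transcription of Lemma~\ref{H23QSD}. Because the argument is entirely parallel to the $H_{23}$ case, the write-up can be kept short, essentially citing Theorems~\ref{thmdualH32} and~\ref{thmorthH23} and the dimension count above.
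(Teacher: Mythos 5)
Your proposal is correct and takes essentially the same route as the paper: the paper proves this lemma simply by symmetry with Lemma~\ref{H23QSD}, and your write-up is exactly that symmetric argument made explicit (component-wise self-orthogonality via Theorem~\ref{thmorthH23}, the coprimality count pinning $|\mathcal{C}_a|=2^m$ and $|\mathcal{C}_b|=3^m$, and the isotropic-subspace bound forcing $\mathcal{C}_b=\mathcal{C}_b^{\perp_S}$). The slight reordering—coprimality first, then the extremal bound—versus the paper's bound-then-divisibility argument in Lemma~\ref{H23QSD} is immaterial.
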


\begin{proof}
By symmetry with Lemma~\ref{H23QSD}, exchanging the binary and ternary parts and using Theorem~\ref{thmdualH32} in place of Theorem~\ref{thmdualH23} yields the result.
\end{proof}


\begin{theorem}\label{SD}
Let $\C=a\, \C_a \oplus b\, \C_b$ be an $H_{23}$-code of length $2m$. Then  $\C$ is symplectic self-dual if and only if  $\C_a$ is a binary symplectic self-dual  code and $\C_b=\mathbb{F}_3^n$.
\end{theorem}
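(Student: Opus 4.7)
The plan is to reduce the statement to a componentwise comparison using the duality formula already proved in Theorem~\ref{thmdualH23}, namely
\[
\C^{\perp_S} \;=\; a\,\C_a^{\perp_S}\;\oplus\;b\,\F_3^{n}.
\]
So $\C=\C^{\perp_S}$ reads
\[
a\,\C_a\;\oplus\;b\,\C_b \;=\; a\,\C_a^{\perp_S}\;\oplus\;b\,\F_3^{n}.
\]
Since $H_{23}=J_a\oplus J_b$ is an internal direct sum with $J_a\cap J_b=\{0\}$, the two summands $a\,(\cdot)\subseteq J_a^{n}$ and $b\,(\cdot)\subseteq J_b^{n}$ on each side sit in complementary submodules of $H_{23}^{n}$, so the equality can be split into its two components independently. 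This gives the equivalences $a\,\C_a=a\,\C_a^{\perp_S}$ and $b\,\C_b=b\,\F_3^{n}$.

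Next I would unwind the scalar factors. The map $\mathbf{u}\mapsto a\mathbf{u}$ is an $\F_2$-module isomorphism from $\F_2^{n}$ onto $a\F_2^{n}\subseteq H_{23}^{n}$ (since $a^2=a\neq 0$), and similarly $\mathbf{v}\mapsto b\mathbf{v}$ is an $\F_3$-module isomorphism from $\F_3^{n}$ onto $b\F_3^{n}$ (since $b$ has additive order $3$ and $J_b\cong\F_3$). Applying these isomorphisms, the first condition becomes $\C_a=\C_a^{\perp_S}$, i.e.\ $\C_a$ is a binary symplectic self-dual code, and the second becomes $\C_b=\F_3^{n}$.

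This establishes both directions simultaneously: if $\C$ is symplectic self-dual, the componentwise split forces $\C_a$ to be symplectic self-dual and $\C_b$ to be the full ternary space; conversely, if these two conditions hold, substituting them back into Theorem~\ref{thmdualH23} gives $\C^{\perp_S}=a\,\C_a\oplus b\,\F_3^{n}=\C$. I do not anticipate any real obstacle here: the only subtlety worth stating explicitly is the use of the internal direct sum decomposition $H_{23}^{n}=J_a^{n}\oplus J_b^{n}$ to legitimately peel off the $a$- and $b$-components, so that the global equality of $H_{23}$-submodules reduces to two separate equalities over $\F_2$ and $\F_3$.
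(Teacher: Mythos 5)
Your proposal is correct and follows essentially the same route as the paper's own proof: both apply the duality formula $\C^{\perp_S}=a\,\C_a^{\perp_S}\oplus b\,\F_3^{n}$ from Theorem~\ref{thmdualH23} and then split the equality $\C=\C^{\perp_S}$ componentwise to obtain $\C_a=\C_a^{\perp_S}$ and $\C_b=\F_3^{n}$. The only difference is that you spell out the justification for the componentwise reduction (the internal direct sum $J_a\oplus J_b$ and the injectivity of $\mathbf{u}\mapsto a\mathbf{u}$, $\mathbf{v}\mapsto b\mathbf{v}$), which the paper leaves implicit.
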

\begin{proof}
By Theorem~\ref{thmdualH23} we have
\[
\C^{\perp_S}
= a\,\C_a^{\perp_S}\;\oplus\;b\,\mathbb{F}_3^{2m}.
\]
Therefore
\[
\C = \C^{\perp_S}
\;\Longleftrightarrow\;
a\,\C_a\oplus b\,\C_b
\;=\;
a\,\C_a^{\perp_S}\oplus b\,\mathbb{F}_3^{2m}
\;\Longleftrightarrow\;
\begin{cases}
a\,\C_a = a\,\C_a^{\perp_S},\\
b\,\C_b = b\,\mathbb{F}_3^{2m}.
\end{cases}
\;\Longleftrightarrow\;
\begin{cases}
\C_a = \C_a^{\perp_S},\\
\C_b = \mathbb{F}_3^{2m}.
\end{cases}
\]

\end{proof}

\begin{theorem}\label{SD2}
Let $\C=a\, \C_a \oplus b\, \C_b$ be an $H_{32}$-code of length $2m$. Then  $\C$ is symplectic self-dual if and only if  $\C_a=\mathbb{F}_3^n$ and  $\C_b$ is a ternary symplectic self-dual code.
\end{theorem}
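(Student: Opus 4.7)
The plan is to mirror the argument given for Theorem~\ref{SD}, but with the roles of the binary and ternary components swapped, using Theorem~\ref{thmdualH32} in place of Theorem~\ref{thmdualH23}. The whole proof should reduce to a direct substitution followed by comparison of direct summands, with no essentially new idea required.

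First I would invoke Theorem~\ref{thmdualH32} to write
\[
\C^{\perp_S} \;=\; a\,\mathbb{F}_2^{\,n} \;\oplus\; b\,\C_b^{\perp_S},
\]
and compare this with the given decomposition $\C = a\,\C_a \oplus b\,\C_b$. Since the decomposition $H_{32} = J_a \oplus J_b$ is an internal direct sum of ideals with $J_a \cong \mathbb{F}_2$ and $J_b \cong \mathbb{F}_3$, the equality $\C = \C^{\perp_S}$ in $H_{32}^{\,n}$ is equivalent to the two componentwise equalities
\[
a\,\C_a \;=\; a\,\mathbb{F}_2^{\,n}, \qquad b\,\C_b \;=\; b\,\C_b^{\perp_S}.
\]
I would then observe that the maps $\mathbf{u}\mapsto a\mathbf{u}$ on $\mathbb{F}_2^{\,n}$ and $\mathbf{v}\mapsto b\mathbf{v}$ on $\mathbb{F}_3^{\,n}$ are injective (since $a$ and $b$ act faithfully on their respective residue modules $J_a,J_b$), so these two equalities are equivalent to $\C_a = \mathbb{F}_2^{\,n}$ and $\C_b = \C_b^{\perp_S}$, respectively. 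Combining these yields precisely the claimed characterization.

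There is no real obstacle in this proof; the only subtle point is the bookkeeping of which component is ``full'' and which is symplectically self-dual, since for $H_{32}$ the symplectic pairing is carried by the $b$-part (as recorded in Remark~\ref{rem}), the exact opposite of the $H_{23}$ case. I would therefore take a moment to state that the binary component must be the full space $\mathbb{F}_2^{\,n}$, in contrast to the $H_{23}$-statement in Theorem~\ref{SD} where it is the ternary component that is forced to be full. (Note that this reading requires correcting the evident typo ``$\C_a=\mathbb{F}_3^{\,n}$'' in the statement to ``$\C_a=\mathbb{F}_2^{\,n}$'', since $\C_a$ is by definition a binary code.)
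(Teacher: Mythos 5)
Your proof is correct and takes essentially the same approach as the paper, which simply states that the argument of Theorem~\ref{SD} carries over verbatim using Theorem~\ref{thmdualH32} in place of Theorem~\ref{thmdualH23} with the binary and ternary summands interchanged; your fleshed-out version (componentwise comparison of the direct summands plus injectivity of $\mathbf{u}\mapsto a\mathbf{u}$ and $\mathbf{v}\mapsto b\mathbf{v}$) is exactly what that analogy amounts to. Your correction of the typo in the statement is also right: since $\C_a$ is by definition a binary code, the condition must read $\C_a=\mathbb{F}_2^{\,n}$, not $\C_a=\mathbb{F}_3^{\,n}$.
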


\begin{proof}
The proof is entirely analogous to that of Theorem~\ref{SD}, but using the duality description from Theorem~\ref{thmdualH32} and interchanging the roles of the binary and ternary summands.
\end{proof}
\bigskip
Observe that by Corollary~\ref{H23QSD} and Theorem~\ref{SD},  symplectic-QSD and symplectic self-dual $H_{23}$-codes have even lengths while symplectic-QSD and symplectic self-dual $H_{32}$-codes have length divisible by 4 since $\C_b$ must be ternary symplectic self-dual codes.

Now, we will show that any $H_{23}$-code or  $H_{32}$-code cannot be both symplectic-QSD and symplectic self-dual at the same time. 
\begin{proposition}\label{QSDnotSD}
Let $\mathcal{Q}$ be the set of all QSD codes of length $n$ over $H_{23}$ (resp. $H_{32}$)and $\mathcal{S}$ be the set of all self-dual
codes of length $n$ over $H_{23}$(resp. $H_{32}$). Then $\mathcal{Q}\cap \mathcal{S}=\emptyset$.
\end{proposition}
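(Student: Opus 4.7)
The plan is to derive a contradiction by comparing cardinalities: a QSD code of length $n=2m$ has size exactly $6^m$, whereas the previously established structural descriptions of symplectic self-dual $H_z$-codes force a strictly larger cardinality as soon as $m\ge 1$. So the two classes never meet (for positive length).

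First I would treat the $H_{23}$ case. Assume, toward a contradiction, that $\C=a\,\C_a\oplus b\,\C_b$ lies in $\mathcal{Q}\cap\mathcal{S}$. From Lemma~\ref{H23QSD}, membership in $\mathcal{Q}$ forces $\C_b$ to be a ternary $[2m,m]$ code, so $|\C_b|=3^m$. From Theorem~\ref{SD}, membership in $\mathcal{S}$ forces $\C_b=\F_3^{2m}$, so $|\C_b|=3^{2m}$. Equating the two gives $3^m=3^{2m}$, hence $m=0$, contradicting the assumption that $n\ge 2$. Equivalently, one can compute $|\C|=2^m\cdot 3^{2m}=18^m$ for the self-dual case against $|\C|=6^m$ for QSD and observe these coincide only when $m=0$.

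Next I would handle the $H_{32}$ case by the mirror argument, using Lemma~\ref{H32QSD} and Theorem~\ref{SD2} in place of their $H_{23}$ analogues: QSD membership forces $|\C_a|=2^m$ while self-duality forces $\C_a=\F_2^{2m}$, i.e.\ $|\C_a|=2^{2m}$, again impossible for $m\ge 1$. In the total-size comparison, self-dual $H_{32}$-codes have $|\C|=12^m$, which differs from $6^m$ whenever $m\ge 1$.

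There is no real obstacle here; the statement is essentially a bookkeeping consequence of the already-proved structural theorems. The only subtlety to mention is that both QSD and self-dual codes over $H_z$ only exist in even length (and, over $H_{32}$, length divisible by $4$), so the proposition is implicitly restricted to lengths $n\ge 2$; the edge case $n=0$ trivially yields equality of the two empty-length codes but is not considered a genuine code in $\mathcal{Q}$ or $\mathcal{S}$.
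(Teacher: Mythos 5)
Your proposal is correct and follows essentially the same route as the paper: both derive the contradiction by noting that QSD membership forces $\dim_{\F_3}\C_b=m$ (via Lemma~\ref{H23QSD}) while self-duality forces $\C_b=\F_3^{2m}$ (via Theorem~\ref{SD}), which is impossible for $m\ge 1$. The only difference is cosmetic: you spell out the mirror $H_{32}$ case via Lemma~\ref{H32QSD} and Theorem~\ref{SD2} (and add an equivalent total-cardinality check, $6^m$ versus $18^m$ or $12^m$), whereas the paper writes out only the $H_{23}$ case and leaves the other to symmetry.
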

\begin{proof}
If \(\C\in\mathcal Q\) then by Corollary~\ref{H23QSD} we have
\[
\C=a\,\C_a\oplus b\,\C_b
\quad\text{with}\quad
\dim_{\F_3}\C_b = m.
\]
If also \(\C\in\mathcal S\) then by Theorem~\ref{SD}
\[
\C_b = \F_3^{2m},
\]
so \(\dim_{\F_3}\C_b=2m\).  Since \(m>0\), \(m\neq2m\), this is impossible.  Hence no nontrivial code can lie in both \(\mathcal Q\) and \(\mathcal S\).
\end{proof}
\qed

\begin{theorem}\label{LCDH23}
Let \(\C=a\,\C_a\oplus b\,\C_b\) be an \(H_{23}\)-code of length \(n=2m\).  Then \(\C\) is symplectic LCD (i.e.\ \(\C\cap\C^{\perp_S}=\{\mathbf{0}\}\)) if and only if \(\C_a\) is binary symplectic LCD and \(\C_b=\{\mathbf{0}\}\).
\end{theorem}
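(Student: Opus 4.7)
The plan is to reduce the LCD condition on $\C$ to separate conditions on the two components $\C_a$ and $\C_b$ by using the explicit description of the symplectic dual obtained in Theorem~\ref{thmdualH23}, together with the fact that the sum $H_{23}^n = aJ_a^n \oplus bJ_b^n$ is direct.

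First I would substitute the formula $\C^{\perp_S} = a\,\C_a^{\perp_S} \oplus b\,\F_3^n$ from Theorem~\ref{thmdualH23} into the intersection $\C \cap \C^{\perp_S}$, giving
\[
\C \cap \C^{\perp_S} \;=\; \bigl(a\,\C_a \oplus b\,\C_b\bigr) \cap \bigl(a\,\C_a^{\perp_S} \oplus b\,\F_3^n\bigr).
\]
The key observation is that every element of $H_{23}^n$ has a unique decomposition as $a\,\mathbf{u} + b\,\mathbf{v}$ with $\mathbf{u}\in\F_2^n$ and $\mathbf{v}\in\F_3^n$, because $J_a \cap J_b = \{0\}$ in $H_{23}$. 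Consequently, intersections of $H_{23}$-submodules presented in this ``$a$-part plus $b$-part'' form distribute over the direct sum, so the intersection above simplifies to $a\,(\C_a \cap \C_a^{\perp_S}) \oplus b\,(\C_b \cap \F_3^n) = a\,(\C_a \cap \C_a^{\perp_S}) \oplus b\,\C_b$.

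From here the conclusion is immediate: $\C \cap \C^{\perp_S} = \{\mathbf{0}\}$ if and only if both summands vanish, that is, $\C_a \cap \C_a^{\perp_S} = \{\mathbf{0}\}$ (the binary symplectic LCD condition on $\C_a$) and $\C_b = \{\mathbf{0}\}$. This gives both directions simultaneously.

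The only non-routine point is justifying the distributivity of intersection over the $a$-summand and $b$-summand; the main obstacle (really, the only thing needing care) is to make the uniqueness of the decomposition $a\,\mathbf{u} + b\,\mathbf{v}$ explicit before splitting the intersection. I would do this by picking an arbitrary element of the left-hand side, writing it in its unique $a$-part / $b$-part form, and observing that membership in each of the two big submodules forces the $a$-part into $a\,\C_a \cap a\,\C_a^{\perp_S} = a\,(\C_a \cap \C_a^{\perp_S})$ and the $b$-part into $b\,\C_b \cap b\,\F_3^n = b\,\C_b$. The symplectic-LCD conclusion for $\C_a$ then uses exactly the definition recalled in Section~\ref{sec:1}.
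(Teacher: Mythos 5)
Your proposal is correct and follows essentially the same route as the paper: substitute the dual formula $\C^{\perp_S}=a\,\C_a^{\perp_S}\oplus b\,\F_3^n$ from Theorem~\ref{thmdualH23}, split the intersection componentwise to get $a\,(\C_a\cap\C_a^{\perp_S})\oplus b\,\C_b$, and conclude that this vanishes exactly when $\C_a$ is binary symplectic LCD and $\C_b=\{\mathbf{0}\}$. Your explicit justification of the componentwise splitting via uniqueness of the decomposition $a\,\mathbf{u}+b\,\mathbf{v}$ is a point the paper uses implicitly, so no gap remains.
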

\begin{proof}
By Theorem~\ref{thmdualH23} we have $\C^{\perp_S}
= a\,\C_a^{\perp_S}\;\oplus\;b\,\F_3^n.$
Hence
\begin{align*}
\C\cap\C^{\perp_S}
&=\bigl(a\,\C_a\oplus b\,\C_b\bigr)\;\cap\;\bigl(a\,\C_a^{\perp_S}\oplus b\,\F_3^n\bigr)\\
&=a\bigl(\C_a\cap\C_a^{\perp_S}\bigr)\;\oplus\;b\bigl(\C_b\cap\F_3^n\bigr)\\
&=a\bigl(\C_a\cap\C_a^{\perp_S}\bigr)\;\oplus\;b\,\C_b.
\end{align*}

\noindent
\((\Rightarrow)\) If \(\C\cap\C^{\perp_S}=\{\mathbf{0}\}\), then $a\,(\C_a\cap\C_a^{\perp_S})=\{\mathbf{0}\}
\quad\Longrightarrow\quad
\C_a\cap\C_a^{\perp_S}=\{\mathbf{0}\}$, so \(\C_a\) is symplectic LCD, and $b\,\C_b=\{\mathbf{0}\}
\quad\Longrightarrow\quad
\C_b=\{\mathbf{0}\}.$

\noindent
\((\Leftarrow)\) Conversely, if \(\C_a\cap\C_a^{\perp_S}=\{\mathbf{0}\}\) and \(\C_b=\{\mathbf{0}\}\), then $\C\cap\C^{\perp_S}
= a\{0\}\;\oplus\;b\{\mathbf{0}\}
=\{\mathbf{0}\},$ so \(\C\) is symplectic LCD.
\end{proof}
\qed

\begin{theorem}
Let $\C=a\, \C_a \oplus b\, \C_b$ be an $H_{32}$-code is symplectic LCD if and only if $\C_a=\{\mathbf{0}\}$, and $\C_b $ is ternary symplectic LCD.
\end{theorem}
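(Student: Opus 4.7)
The plan is to mirror the proof of Theorem~\ref{LCDH23} exactly, with the roles of the binary and ternary summands swapped, invoking Theorem~\ref{thmdualH32} in place of Theorem~\ref{thmdualH23}. Because both results are purely duality-theoretic identities followed by an intersection of direct sums, nothing new has to be introduced; the argument is essentially a bookkeeping exercise.

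First I would write $\C^{\perp_S}$ in its component form using Theorem~\ref{thmdualH32}, namely
\[
\C^{\perp_S} = a\,\F_2^n \;\oplus\; b\,\C_b^{\perp_S}.
\]
Then I would compute the intersection
\[
\C\cap\C^{\perp_S}
=\bigl(a\,\C_a\oplus b\,\C_b\bigr)\cap\bigl(a\,\F_2^n\oplus b\,\C_b^{\perp_S}\bigr)
=a\bigl(\C_a\cap\F_2^n\bigr)\oplus b\bigl(\C_b\cap\C_b^{\perp_S}\bigr)
=a\,\C_a\;\oplus\;b\bigl(\C_b\cap\C_b^{\perp_S}\bigr),
\]
using that the two summands $a\F_2^n$ and $b\F_3^n$ of $H_{32}^n$ intersect trivially, which allows the intersection to split componentwise.

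From this identity the two directions are immediate. For the forward direction, assuming $\C\cap\C^{\perp_S}=\{\mathbf{0}\}$ forces $a\,\C_a=\{\mathbf{0}\}$, hence $\C_a=\{\mathbf{0}\}$ (since the action of $a$ on $\F_2^n$ is faithful), and also $b\,(\C_b\cap\C_b^{\perp_S})=\{\mathbf{0}\}$, hence $\C_b\cap\C_b^{\perp_S}=\{\mathbf{0}\}$, i.e.\ $\C_b$ is ternary symplectic LCD. For the converse, substituting $\C_a=\{\mathbf{0}\}$ and $\C_b\cap\C_b^{\perp_S}=\{\mathbf{0}\}$ into the boxed formula yields $\C\cap\C^{\perp_S}=\{\mathbf{0}\}$.

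There is no real obstacle here; the only subtlety worth stating carefully is the componentwise splitting of the intersection, which relies on the internal direct sum decomposition $H_{32}^n = a\,\F_2^n \oplus b\,\F_3^n$ and on the fact that the maps $\mathbf{u}\mapsto a\mathbf{u}$ and $\mathbf{v}\mapsto b\mathbf{v}$ are injective on $\F_2^n$ and $\F_3^n$ respectively (so that $a\C_a=\{\mathbf{0}\}\Leftrightarrow\C_a=\{\mathbf{0}\}$, and likewise for $b$). Once this is noted, the proof reduces to the two short implications above.
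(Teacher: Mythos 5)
Your proposal is correct and follows essentially the same route as the paper: the paper's proof simply declares the argument identical to that of Theorem~\ref{LCDH23} with Theorem~\ref{thmdualH32} substituted for Theorem~\ref{thmdualH23}, which is precisely the componentwise intersection computation you carry out explicitly. Writing out the splitting $\C\cap\C^{\perp_S}=a\,\C_a\oplus b\bigl(\C_b\cap\C_b^{\perp_S}\bigr)$ and the injectivity of $\mathbf{u}\mapsto a\mathbf{u}$, $\mathbf{v}\mapsto b\mathbf{v}$ is a faithful (and slightly more detailed) rendering of what the paper leaves implicit.
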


\begin{proof}
The proof is identical to that of Theorem~\ref{LCDH23}, with Theorem~\ref{thmdualH32} in place of Theorem~\ref{thmdualH23}.
\end{proof}
\qed

\begin{proposition}\label{permLCDH23}
Let $\C = a\,\C_a\oplus b\,\C_b, \quad\text{and}\quad
\C' = a\,\C'_a\oplus b\,\C'_b$ be two $H_{23}$-codes of length $n=2m$.  Then:
\begin{enumerate}
  \item If $\C$ and $\C'$ are symplectic self-dual, then $\C$ and $\C'$ are permutation equivalent if and only if $\C_a$ and $\C'_a$ are permutation equivalent.
  \item If $\C$ and $\C'$ are symplectic LCD, then $\C$ and $\C'$ are permutation equivalent if and only if $\C_a$ and $\C'_a$ are permutation equivalent.
\end{enumerate}
\end{proposition}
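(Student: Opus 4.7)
The plan is to reduce the permutation equivalence problem for the full $H_{23}$-code to the permutation equivalence of its binary component, by exploiting the fact that in each of the two cases the ternary component $\C_b$ is forced by the structural theorems to be a permutation-invariant subspace of $\F_3^n$ (namely the whole space or the zero code), so it contributes nothing to the classification.

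For the forward direction, suppose $\sigma$ is a coordinate permutation with $\sigma(\C)=\C'$. I would first note that multiplication by $a$ projects any codeword onto its binary part: using $a^2=a$, $ab=0$, $b^2=0$ in $H_{23}$, one checks that $a\cdot(a\mathbf u+b\mathbf v)=a\mathbf u$, so $a\C=a\C_a$ and $a\C'=a\C'_a$. Since coordinate permutations commute with scalar multiplication by $a$, applying $\sigma$ gives $a\sigma(\C_a)=\sigma(a\C_a)=\sigma(a\C)=a\C'=a\C'_a$. Because the map $\mathbf u\mapsto a\mathbf u$ from $\F_2^n$ to $aH_{23}^n$ is a bijection (identifying $\F_2$ with $J_a=\{0,a\}$), this yields $\sigma(\C_a)=\C'_a$. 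The same argument applies in both cases (1) and (2), since it does not use any property of $\C_b$.

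For the converse, I would invoke the relevant structural theorem to pin down $\C_b$ and $\C'_b$. In case (1), Theorem~\ref{SD} gives $\C_b=\C'_b=\F_3^n$; in case (2), Theorem~\ref{LCDH23} gives $\C_b=\C'_b=\{\mathbf 0\}$. In both situations, $\C_b$ is invariant under every coordinate permutation. Thus, if $\sigma(\C_a)=\C'_a$, then
\[
\sigma(\C)=\sigma(a\C_a\oplus b\C_b)=a\sigma(\C_a)\oplus b\sigma(\C_b)=a\C'_a\oplus b\C_b=a\C'_a\oplus b\C'_b=\C',
\]
completing the equivalence.

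There is no real obstacle here: the argument is essentially bookkeeping once the projection $\C\mapsto a\C=a\C_a$ is observed to be compatible with permutations, and once the structural theorems have been used to collapse the ternary factor to a permutation-invariant object. The only subtle point to state carefully is that the forward implication does not require either hypothesis (self-duality or LCD), whereas the backward implication does, since in general $\C_b$ and $\C'_b$ could be distinct nontrivial ternary codes whose permutation equivalence would be an additional condition.
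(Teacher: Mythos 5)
Your proof is correct and follows essentially the same route as the paper's: the forward direction extracts $\sigma(\C_a)=\C'_a$ by projecting onto the $a$-component, and the converse uses Theorem~\ref{SD} (resp.\ Theorem~\ref{LCDH23}) to force $\C_b=\C'_b$ to be the permutation-invariant code $\F_3^n$ (resp.\ $\{\mathbf 0\}$). Two small refinements of yours are worth noting: implementing the projection as multiplication by $a$ (using $a^2=a$, $ab=b^2=0$) makes the forward implication hypothesis-free and yields $\sigma(\C_a)=\C'_a$ directly, avoiding the paper's dimension-counting step, and you correctly invoke Theorem~\ref{LCDH23} for $\C_b=\{\mathbf 0\}$ in the LCD case, where the paper's proof cites Corollary~\ref{nice1} (a characterization of symplectic nice codes) instead.
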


\begin{proof}
\begin{enumerate}
\item[1.]  Suppose $\C,\C'$ are symplectic self-dual.  By Theorem~\ref{SD}, $\C_b=\F_3^n
\quad\text{and}\quad
\C'_b=\F_3^n.$ Hence $\C = a\,\C_a\;\oplus\;b\,\F_3^n,
\quad\text{and}\quad
\C' = a\,\C'_a\;\oplus\;b\,\F_3^n.$ If there is a permutation $\pi\in S_n$ with $\pi(\C)=\C'$, then for any codeword
\[
\c=a\,\u+b\,\v\in\C,\quad \u\in\C_a,\;\v\in\F_3^n,
\]
we have 
\[
\pi(\c)
=\pi(a\,\u)+\pi(b\,\v)
=a\,\pi(\u)+b\,\pi(\v)
\;\in\;
a\,\C'_a\oplus b\,\F_3^n.
\]
Projecting onto the $a$-summand shows $\pi(\u)\in\C'_a$ for all $\u\in\C_a$, hence $\pi(\C_a)=\C'_a$.

Conversely, if $\pi(\C_a)=\C'_a$, then for any 
$\c=a\,\u+b\,\v\in\C$ we get
\[
\pi(\c)
= a\,\pi(\u) + b\,\pi(\v)
\;\in\;
a\,\C'_a\oplus b\,\F_3^n
=\C',
\]
so $\pi(\C)\subseteq\C'$.  Since $\pi$ is bijective and $\dim\C=\dim\C'$, it follows $\pi(\C)=\C'$, i.e.\ $\C$ and $\C'$ are permutation equivalent.

\item[2.]  Suppose $\C,\C'$ are symplectic LCD.  By Corollary~\ref{nice1}, $\C_b=\C'_b=\{\mathbf0\}$, so 
\[
\C = a\,\C_a,
\quad
\C' = a\,\C'_a.
\]
Exactly the same permutation-projection argument shows $\pi(\C)=\C'$ if and only if $\pi(\C_a)=\C'_a$.
\end{enumerate}
\end{proof}

\begin{proposition}\label{permLCDH32}
Let $\C = a\,\C_a\oplus b\,\C_b, \quad\text{and}\quad
\C' = a\,\C'_a\oplus b\,\C'_b$ 
be two \(H_{32}\)-codes of length \(n\).  Then:
\begin{enumerate}
  \item If \(\C\) and \(\C'\) are symplectic self-dual, then \(\C\) and \(\C'\) are permutation equivalent if and only if \(\C_b\) and \(\C'_b\) are permutation equivalent.
  \item If \(\C\) and \(\C'\) are symplectic LCD, then \(\C\) and \(\C'\) are permutation equivalent if and only if \(\C_b\) and \(\C'_b\) are permutation equivalent.
\end{enumerate}
\end{proposition}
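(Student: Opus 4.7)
The plan is to mirror the proof of Proposition~\ref{permLCDH23}, interchanging the roles of the binary and ternary summands and invoking Theorem~\ref{SD2} in place of Theorem~\ref{SD} (for the self-dual case) and Corollary~\ref{nice2} in place of Corollary~\ref{nice1} (for the LCD case). In both cases, the point is to reduce $\C$ and $\C'$ to a canonical form in which the $a$-summand is either trivial or the full space $\F_2^n$, and then use a simple coordinate-projection argument.

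For part (1), I would first apply Theorem~\ref{SD2}: if $\C$ and $\C'$ are symplectic self-dual, then $\C_a=\C'_a=\F_2^n$, so $\C=a\,\F_2^n\oplus b\,\C_b$ and $\C'=a\,\F_2^n\oplus b\,\C'_b$. Given any $\pi\in S_n$ with $\pi(\C)=\C'$, I would take a codeword of the form $\c=a\,\u+b\,\v$ with $\v\in\C_b$, write $\pi(\c)=a\,\pi(\u)+b\,\pi(\v)$, and project onto the $b$-summand to deduce $\pi(\v)\in\C'_b$, whence $\pi(\C_b)\subseteq\C'_b$; equality follows by dimension. Conversely, if $\pi(\C_b)=\C'_b$, then since $\pi$ permutes $\F_2^n$ to itself, $\pi(a\,\u+b\,\v)=a\,\pi(\u)+b\,\pi(\v)\in a\,\F_2^n\oplus b\,\C'_b=\C'$, so $\pi(\C)\subseteq\C'$ and bijectivity forces equality.

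For part (2), the setup is even cleaner. By the symplectic LCD characterization for $H_{32}$ (the theorem immediately preceding Proposition~\ref{permLCDH23}), if $\C$ and $\C'$ are symplectic LCD, then $\C_a=\C'_a=\{\mathbf{0}\}$, so $\C=b\,\C_b$ and $\C'=b\,\C'_b$. A permutation $\pi\in S_n$ then satisfies $\pi(\C)=\C'$ if and only if $\pi(b\,\C_b)=b\,\C'_b$, which is equivalent to $\pi(\C_b)=\C'_b$ since the map $\v\mapsto b\,\v$ is an $\F_3$-linear bijection from $\F_3^n$ onto $b\,\F_3^n\subseteq H_{32}^n$ commuting with coordinate permutations.

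There is no real obstacle here: once Theorem~\ref{SD2} and the $H_{32}$-LCD theorem pin down the $a$-component, the argument is the standard coordinate-projection/extension used in Proposition~\ref{permLCDH23}. The only small care required is to note that in the self-dual case the $a$-summand $a\,\F_2^n$ is $\pi$-invariant (so $\pi$ restricted to the $b$-summand is well-defined as a map of ternary codes), and in the LCD case the $a$-summand is trivial so the correspondence $\C\leftrightarrow\C_b$ is a bijection of permutation-equivalence classes essentially by definition.
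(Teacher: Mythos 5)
Your proposal is correct and takes essentially the same approach as the paper: the paper's own proof is a one-line remark that the argument of Proposition~\ref{permLCDH23} carries over with Theorem~\ref{SD} replaced by Theorem~\ref{SD2} and the binary/ternary summands interchanged, which is exactly the reduction-plus-projection argument you spell out. As a minor point in your favor, in part (2) you invoke the $H_{32}$ symplectic LCD characterization theorem, which is the logically correct reference, whereas the paper nominally cites Corollary~\ref{nice2} (a citation slip inherited from the proof of Proposition~\ref{permLCDH23}, which cites Corollary~\ref{nice1} where Theorem~\ref{LCDH23} is what is actually needed).
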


\begin{proof}
The proof is entirely analogous to that of Proposition~\ref{permLCDH23}, replacing Theorem~\ref{SD} by Theorem~\ref{SD2} and Corollary~\ref{nice1} by Corollary~\ref{nice2}.
\end{proof}
\section{Classification  }\label{Hclass}


\begin{theorem}\label{H23SDR}
Fix a compatible pair \((\C_a,\C_b)\) for \(H_z\) (with \(\C_a\) and \(\C_b\) as in the appropriate binary/ternary roles), and let $\operatorname{Aut}(\C_a),\,\operatorname{Aut}(\C_b)\;\le\;S_n$ be their permutation-automorphism groups.  Choose representatives \(\{\sigma_1,\dots,\sigma_r\}\subset S_n\) for the double cosets \(\operatorname{Aut}(\C_a)\backslash S_n/\operatorname{Aut}(\C_b)\).  Then
\[
S_{\C_a,\C_b}
\;=\;
\bigl\{\,a\,\C_a \;+\; b\,\sigma_i(\C_b)\;\bigm|\;1\le i\le r\bigr\}
\]
is a complete set of permutation-inequivalent symplectic self-orthogonal \(H_z\)-codes with component pair \((\C_a,\C_b)\), and
\[
\bigl|S_{\C_a,\C_b}\bigr|
\;=\;
\bigl|\operatorname{Aut}(\C_a)\backslash S_n/\operatorname{Aut}(\C_b)\bigr|.
\]
\end{theorem}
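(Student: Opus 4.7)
The plan is to prove the theorem by the standard double-coset enumeration technique, organized in three pieces: reducing every relevant code to the normal form $a\,\C_a \oplus b\,\sigma(\C_b)$, characterizing permutation equivalence within this family by double cosets, and confirming symplectic self-orthogonality. First I would take an arbitrary symplectic self-orthogonal $H_z$-code $\mathcal{D} = a\,\mathcal{D}_a \oplus b\,\mathcal{D}_b$ whose binary and ternary components are permutation equivalent to $\C_a$ and $\C_b$ respectively, and choose $\pi \in S_n$ with $\pi(\mathcal{D}_a) = \C_a$. Since $\pi$ acts coordinate-wise and commutes with scalar multiplication by $a$ and by $b$, this yields $\pi(\mathcal{D}) = a\,\C_a \oplus b\,\pi(\mathcal{D}_b)$; writing $\pi(\mathcal{D}_b) = \sigma(\C_b)$ for some $\sigma \in S_n$ then puts $\mathcal{D}$ in the desired form up to permutation equivalence. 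That each such code is itself symplectic self-orthogonal follows from Theorem~\ref{thmorthH23} or Theorem~\ref{thmorthH32}, since the compatible pair hypothesis supplies self-orthogonality of the relevant component over the appropriate base field.

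Next I would characterize when two representatives $a\,\C_a \oplus b\,\sigma(\C_b)$ and $a\,\C_a \oplus b\,\sigma'(\C_b)$ are permutation equivalent. If $\rho \in S_n$ intertwines them, then since $\rho$ acts coordinate-wise and commutes with multiplication by $a$ and by $b$, and since $a\,H_z^n$ and $b\,H_z^n$ meet trivially inside $H_z^n$ (a consequence of $H_z = J_a \oplus J_b$), I can project separately onto the $a$- and $b$-summands. This yields the simultaneous conditions $\rho(\C_a) = \C_a$ and $\rho\sigma(\C_b) = \sigma'(\C_b)$, i.e.\ $\rho \in \operatorname{Aut}(\C_a)$ and $\sigma'^{-1}\rho\sigma \in \operatorname{Aut}(\C_b)$, or equivalently $\sigma' \in \operatorname{Aut}(\C_a)\,\sigma\,\operatorname{Aut}(\C_b)$. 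The assignment $\sigma \mapsto a\,\C_a \oplus b\,\sigma(\C_b)$ therefore descends to a bijection between double cosets in $\operatorname{Aut}(\C_a)\backslash S_n/\operatorname{Aut}(\C_b)$ and permutation equivalence classes in $S_{\C_a,\C_b}$, giving both completeness of the list $\{a\,\C_a \oplus b\,\sigma_i(\C_b)\}_{i=1}^r$ and the cardinality formula.

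The step requiring the most care is the separation of the action onto the two summands: one needs that a coordinate permutation intertwining two such direct-sum codes must preserve each summand individually, which is not automatic over an arbitrary ring. Here it is forced by the decomposition $H_z = J_a \oplus J_b$ together with $J_a J_b = 0$, so that multiplication by $a$ isolates the binary part of a codeword while multiplication by $b$ isolates the ternary part, making the two summands algebraically distinguishable inside $H_z^n$. Once this separation is in hand, the remainder of the argument is a routine translation between stabilizer conditions and double-coset language.
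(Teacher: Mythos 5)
The paper states Theorem~\ref{H23SDR} without any proof at all, so there is no argument of the author's to compare yours against; judged on its own merits, your proposal contains two genuine gaps.

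The serious one is your self-orthogonality step in the case $z=32$. You claim each $a\,\C_a + b\,\sigma_i(\C_b)$ is symplectic self-orthogonal by Theorem~\ref{thmorthH32}, ``since the compatible pair hypothesis supplies self-orthogonality of the relevant component.'' But for $z=32$ the relevant component of the constructed code is $\sigma_i(\C_b)$, not $\C_b$, and symplectic self-orthogonality is \emph{not} invariant under arbitrary coordinate permutations: the form $\langle\x,\y\rangle_s=\x_1\cdot\y_2-\x_2\cdot\y_1$ depends on the splitting of the $n=2m$ coordinates into two halves, which a general element of $S_n$ destroys. Concretely, $\C_b=\langle(1,0,0,0),(0,0,0,1)\rangle\subseteq\F_3^4$ is symplectic self-orthogonal, yet swapping the last two coordinates gives $\langle(1,0,0,0),(0,0,1,0)\rangle$, whose generators have symplectic product $1$. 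Hence for $z=32$ the listed codes need not be self-orthogonal at all. (This is really a defect of the theorem as displayed --- for $z=32$ the permutation ought to act on the unconstrained binary component, giving $a\,\sigma_i(\C_a)+b\,\C_b$ --- and it stays hidden in the paper's algorithm only because there one of the two components is always the full ambient space, forcing $r=1$ and $\sigma_1=\mathrm{id}$. A proof must either flag and repair this or restrict to $z=23$; yours silently asserts the false statement.) Your argument is sound for $z=23$, where $\sigma_i$ moves only the ternary component, which by Theorem~\ref{thmorthH23} is irrelevant to self-orthogonality.

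The second gap is the separation mechanism you yourself single out as the crux. You justify it by saying that multiplication by $a$ isolates the binary part while multiplication by $b$ isolates the ternary part, invoking $J_aJ_b=0$. In $H_{23}$ one has $b^2=0$ and $ba=0$, so $b\,x=0$ for \emph{every} $x\in H_{23}$: multiplication by $b$ annihilates the whole module (and symmetrically $a\,x=0$ for all $x\in H_{32}$). So in each ring exactly one of your two isolating maps exists, and the phrase ``$a\,H_z^n$ and $b\,H_z^n$ meet trivially'' is vacuous because one of those sets is $\{0\}$. The separation statement you need is nonetheless true and easily repaired: in $H_{23}$ recover the ternary summand as $\{x\in\C: a\,x=0\}=b\,\C_b$, or use the additive maps $x\mapsto 3x$ and $x\mapsto 4x$, which isolate the $a$- and $b$-parts (since $3a=a$, $3b=0$, $4a=0$, $4b=b$) and commute with coordinate permutations. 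With that repair, your double-coset bookkeeping in the second paragraph is correct and completes the proof in the $z=23$ case.
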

\noindent As an immediate consequence of Theorem~\ref{H23SDR}, analogous to Corollary 6.2 of \cite{AE}, the full classification is obtained by ranging over all inequivalent component pairs \((\C_a,\C_b)\).
\bigskip
\begin{corollary}
Let $L_a=\{\text{all inequivalent binary codes $\C_a\subseteq\F_2^n$ with symplectic self-orthogonality}\},
$\, \text{and} \,$L_b=\{\text{all inequivalent ternary codes $\C_b\subseteq\F_3^n$ with symplectic self-orthogonality}\}.$
Then the set of all symplectic self-orthogonal $H_{z}$-codes of length $n$ is
\[
\bigcup_{\C_a\in L_a,\;\C_b\in L_b}
S_{\C_a,\C_b}.
\]
\end{corollary}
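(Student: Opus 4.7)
The plan is to establish both containments of the claimed set equality. The reverse containment $(\supseteq)$ is essentially an unpacking of the constructions: any element of $S_{\C_a,\C_b}$ has the form $a\,\C_a + b\,\sigma_i(\C_b)$, and Theorems \ref{thmbinaryternary1} and \ref{thmbinaryternary2} guarantee that such a code is symplectic self-orthogonal as soon as the $z$-appropriate component (binary for $H_{23}$, ternary for $H_{32}$) is itself symplectic self-orthogonal. Since this property is preserved under the coordinate permutation $\sigma_i$, each member of $S_{\C_a,\C_b}$ is indeed a symplectic self-orthogonal $H_z$-code of length $n$, as required.

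For the main inclusion $(\subseteq)$, start with an arbitrary symplectic self-orthogonal $H_z$-code $\C$ of length $n$. By the module decomposition recalled in Section \ref{sec:rings}, write uniquely $\C = a\,\C_a \oplus b\,\C_b$ with $\C_a\subseteq\F_2^n$ and $\C_b\subseteq\F_3^n$ linear over the respective prime fields. Theorem \ref{thmorthH23} forces the $z$-appropriate component to be symplectic self-orthogonal over its field, so the permutation-equivalence class of that component is represented in $L_a$ (resp.\ $L_b$). In the other slot there is no additional constraint beyond being a linear code whose class is represented in the other list.

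Next, I would reduce to canonical representatives. Let $\C_a'\in L_a$ and $\C_b'\in L_b$ be the representatives chosen with $\pi_a(\C_a')=\C_a$ and $\pi_b(\C_b')=\C_b$ for some $\pi_a,\pi_b\in S_n$. Applying the single coordinate permutation $\pi_a^{-1}$ to $\C$ gives the $H_z$-permutation-equivalent code
\[
\pi_a^{-1}(\C) \;=\; a\,\C_a' \;\oplus\; b\,(\pi_a^{-1}\pi_b)(\C_b'),
\]
which has the form $a\,\C_a' + b\,\sigma(\C_b')$ for $\sigma = \pi_a^{-1}\pi_b\in S_n$. By Theorem \ref{H23SDR}, the set $S_{\C_a',\C_b'}$ contains a representative for every double coset in $\operatorname{Aut}(\C_a')\backslash S_n/\operatorname{Aut}(\C_b')$; since $\sigma$ lies in one such double coset, $\pi_a^{-1}(\C)$ is permutation equivalent to some member of $S_{\C_a',\C_b'}$. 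Consequently $\C$ itself is permutation equivalent to a member of the displayed union, proving the inclusion.

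The main obstacle is the permutation bookkeeping: a single coordinate permutation acts \emph{diagonally} on both the $a$- and the $b$-component of an $H_z$-code, so one cannot simply bring $\C_a$ and $\C_b$ into canonical form by two independent permutations $\pi_a^{-1}$ and $\pi_b^{-1}$. The argument above handles this by first spending the one available permutation $\pi_a^{-1}$ to normalize the $a$-component to $\C_a'$, and then absorbing the residual freedom on the $b$-side into the double-coset classification provided by Theorem \ref{H23SDR}. Once this bookkeeping is carried out carefully, the two containments combine to give the stated union description.
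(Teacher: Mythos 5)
Your double-coset bookkeeping (normalize the $a$-component by $\pi_a^{-1}$, then place $\sigma=\pi_a^{-1}\pi_b$ in a double coset of $\operatorname{Aut}(\C_a')\backslash S_n/\operatorname{Aut}(\C_b')$) is correct, and it is exactly the mechanism of Theorem~\ref{H23SDR}; note the paper itself gives no proof of this corollary beyond declaring it an immediate consequence of that theorem. However, each of your two containments rests on a false step. In the direction $(\supseteq)$ you assert that symplectic self-orthogonality ``is preserved under the coordinate permutation $\sigma_i$.'' It is not: the symplectic form pairs coordinate $i$ with coordinate $m+i$, and a general element of $S_n$ destroys this pairing. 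For example, in $\F_3^4$ (so $m=2$) the code spanned by $(1,0,0,0)$ and $(0,1,0,0)$ is symplectic self-orthogonal, but swapping coordinates $2$ and $3$ produces the span of $(1,0,0,0)$ and $(0,0,1,0)$, and $\langle (1,0,0,0),(0,0,1,0)\rangle_s^{(\F_3)}=(1,0)\cdot(1,0)-(0,0)\cdot(0,0)=1\neq 0$. This matters precisely for $z=32$: there the permuted component $\sigma_i(\C_b)$ is the one that must be self-orthogonal (Theorem~\ref{thmorthH23}), so members of $S_{\C_a,\C_b}$ need not be symplectic self-orthogonal at all. (For $z=23$ the permutation falls on the unconstrained ternary side, so that case survives.)

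In the direction $(\subseteq)$ you claim the unconstrained component's equivalence class ``is represented in the other list.'' Under the corollary's own definitions it need not be: $L_b$ contains only \emph{self-orthogonal} ternary codes, while by Theorem~\ref{thmorthH23} the ternary component of a symplectic self-orthogonal $H_{23}$-code is completely arbitrary. Concretely, $\C=a\,\{\mathbf{0}\}\oplus b\,\F_3^n$ is symplectic self-orthogonal, but $\F_3^n$ is not symplectic self-orthogonal (its symplectic dual is $\{\mathbf{0}\}$), so no pair $(\C_a,\C_b)\in L_a\times L_b$ produces a code permutation-equivalent to $\C$; the symmetric failure occurs for $z=32$ and $L_a$. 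So the inclusion fails as argued, and in fact it cannot be repaired without amending the statement: the list attached to the $z$-appropriate component should carry the self-orthogonality constraint, while the other component must range over \emph{all} linear codes of that length (this is what your own appeal to Theorem~\ref{thmorthH23} actually gives, and it is also where the corollary's lists disagree with the paper's subsequent Algorithm). A correct write-up would either prove that corrected statement — your normalization-plus-double-coset argument then goes through, provided the permutation is applied to the unconstrained component — or explicitly flag that the equality as stated is inconsistent with Theorem~\ref{thmorthH23}; as written, the proposal papers over both points.
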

\bigskip

\noindent To classify symplectic self-orthogonal \(H_z\)-codes of even length \(n=2m\le8\), we proceed with the following algorithm:

\subsubsection*{Algorithm}

\begin{enumerate}
  \item \textbf{Component choices.}  
    \begin{itemize}
      \item If \(z=23\): 
        \begin{itemize}
          \item \(L_a\): all inequivalent binary codes \(\C_a\subseteq\F_2^n\) with symplectic self-orthogonality;
          \item \(L_b=\{\F_3^n\}\) (the full ternary space).
        \end{itemize}
      \item If \(z=32\):
        \begin{itemize}
          \item \(L_a=\{\F_2^n\}\) (the full binary space);
          \item \(L_b\): all inequivalent ternary codes \(\C_b\subseteq\F_3^n\) with symplectic self-orthogonality.
        \end{itemize}
    \end{itemize}

  \item \textbf{Automorphisms.}  
    For each \((\C_a,\C_b)\in L_a\times L_b\), compute \(\operatorname{Aut}(\C_a)\) and \(\operatorname{Aut}(\C_b)\) as subgroups of \(S_n\).

  \item \textbf{Double-coset representatives.}  
    Determine a system of distinct representatives \(\{\sigma_1,\dots,\sigma_r\}\) for the double coset space
    \[
      \operatorname{Aut}(\C_a)\backslash S_n/\operatorname{Aut}(\C_b).
    \]

  \item \textbf{Code construction.}  
    For each \(i=1,\dots,r\), form
    \[
      \C^{(i)}=a\,\C_a + b\,\sigma_i(\C_b),
    \]
    and verify symplectic self-orthogonality.  These \(\C^{(i)}\) are pairwise permutation-inequivalent.

  \item \textbf{Collect.}  
    Varying \((\C_a,\C_b)\) over \(L_a\times L_b\) and aggregating all \(\C^{(i)}\) yields the complete classification for length \(n\le8\).
\end{enumerate}
\begin{example}
	
Let $
L_a=\{\langle(1,0)\rangle,\;\langle(1,1)\rangle\}\subseteq\F_2^2,$ and $L_b=\{\langle(1,0)\rangle,\;\langle(1,1)\rangle,\;\langle(1,2)\rangle\}\subseteq\F_3^2.$
By Theorem~\ref{thmbinaryternary1} every \(H_{23}\)-code of the form \(\C=a\,\C_a\oplus b\,\C_b\) with \(\C_a\in L_a\) and \(\C_b\in L_b\) is symplectic self-orthogonal. Set \(A_1=\langle(1,0)\rangle\), \(A_2=\langle(1,1)\rangle\) in \(\F_2^2\); both are symplectic self-orthogonal.  Let \(B_1=\langle(1,0)\rangle\), \(B_2=\langle(1,1)\rangle\), \(B_3=\langle(1,2)\rangle\) in \(\F_3^2\). As subgroups of \(S_2=\{e,(1\ 2)\}\) one has:
\[
\operatorname{Aut}(A_1)=\{e\},\quad \operatorname{Aut}(A_2)=S_2;
\qquad
\operatorname{Aut}(B_1)=\{e\},\quad \operatorname{Aut}(B_2)=S_2,\quad \operatorname{Aut}(B_3)=S_2.
\]

Now, compute \(\operatorname{Aut}(A_i)\backslash S_2/\operatorname{Aut}(B_j)\):
\begin{itemize}
  \item \((A_1,B_1)\): \(1\backslash S_2/1\) has size 2, giving two inequivalent choices (\(\sigma=e\) and the swap) on \(B_1\).
  \item \((A_1,B_2)\), \((A_1,B_3)\): \(1\backslash S_2/S_2\) gives 1 each.
  \item \((A_2,B_1)\), \((A_2,B_2)\), \((A_2,B_3)\): \(S_2\backslash S_2/*\) gives 1 each.
\end{itemize}
In total there are \(2+1+1+1+1+1=7\) double-coset representatives, hence 7 permutation-inequivalent codes.
The seven inequivalent symplectic self-orthogonal \(H_{23}\)-codes are
\[
\begin{array}{ll}
\C_1 = a\,\langle(1,0)\rangle \oplus b\,\langle(1,0)\rangle, &
\C_2 = a\,\langle(1,0)\rangle \oplus b\,\langle(0,1)\rangle,\\
\C_3 = a\,\langle(1,0)\rangle \oplus b\,\langle(1,1)\rangle, &
\C_4 = a\,\langle(1,0)\rangle \oplus b\,\langle(1,2)\rangle,\\
\C_5 = a\,\langle(1,1)\rangle \oplus b\,\langle(1,0)\rangle, &
\C_6 = a\,\langle(1,1)\rangle \oplus b\,\langle(1,1)\rangle,\\
\C_7 = a\,\langle(1,1)\rangle \oplus b\,\langle(1,2)\rangle. &
\end{array}
\]

\end{example}
\begin{lemma}[Lemma 3.1 \cite{}]
Let \(\langle\cdot,\cdot\rangle_s\) be a symplectic inner product on \(\mathbb{F}_p^{2m}\). Then for every vector \(v\in\mathbb{F}_p^{2m}\) one has
\[
\langle v,v\rangle_s=0.
\]
In other words, every vector is self-orthogonal with respect to the symplectic form.
\end{lemma}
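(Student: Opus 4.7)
The plan is to unfold the definition of the symplectic inner product on each of $\mathbb{F}_2^{2m}$ and $\mathbb{F}_3^{2m}$ and exploit the fact that the ordinary dot product on $\mathbb{F}_p^m$ is commutative. Concretely, for an arbitrary $v\in\mathbb{F}_p^{2m}$ I would write $v = (v_1 \mid v_2)$ with $v_1,v_2\in\mathbb{F}_p^m$, then substitute $\mathbf{x}=\mathbf{y}=v$ into the formulas given at the start of Section~\ref{sec:1}.

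In the binary case ($p=2$) this gives
\[
\langle v,v\rangle_s^{(\mathbb{F}_2)} \;=\; v_1\cdot v_2 \;+\; v_2\cdot v_1 \;=\; 2(v_1\cdot v_2),
\]
which vanishes because $2=0$ in $\mathbb{F}_2$. In the ternary case ($p=3$) the analogous substitution yields
\[
\langle v,v\rangle_s^{(\mathbb{F}_3)} \;=\; v_1\cdot v_2 \;-\; v_2\cdot v_1 \;=\; 0
\]
by symmetry of the dot product, valid in any characteristic. Either cancellation is a one-line check.

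There is no genuine obstacle here; the only point to attend to is that the two cases $p=2$ and $p=3$ invoke different features of the definition (vanishing through characteristic two in the first case, through the opposite sign in the second), so I would treat them separately rather than trying to unify them into a single line. The proof is essentially mechanical and fits in a few lines once the vector is split according to the $(v_1\mid v_2)$ decomposition.
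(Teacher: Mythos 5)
Your computation is correct, and it is worth noting that the paper itself offers no proof to compare against: the lemma is stated with an (empty) citation placeholder and left unproved, so your argument supplies exactly the verification the paper omits. Substituting $\mathbf{x}=\mathbf{y}=v$ into the two defining formulas of Section~2 and using commutativity of the dot product is the standard and essentially only argument here. One small simplification you could make: since $+1=-1$ in $\mathbb{F}_2$, the binary form $v_1\cdot v_2 + v_2\cdot v_1$ is literally the same expression as $v_1\cdot v_2 - v_2\cdot v_1$, so the antisymmetry argument you give for $p=3$ already covers $p=2$ (and indeed every prime $p$) in one line; the case split is harmless but unnecessary, and collapsing it has the advantage of matching the lemma's stated generality over arbitrary $\mathbb{F}_p^{2m}$ rather than only the two fields the paper defines explicitly.
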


\begin{lemma}[Lemma 3.2]
The number of \(k\)-dimensional symplectic self-orthogonal subspaces (i.e., totally isotropic \(k\)-spaces) of \(\mathbb{F}_p^{2m}\) is
\[
\frac{\prod_{i=0}^{k-1}\bigl(p^{2m-2i}-1\bigr)}
{\prod_{j=1}^k (p^j-1)}
\;=\;
\frac{(p^{2m-2k+2}-1)(p^{2m-2k+4}-1)\cdots(p^{2m}-1)}
{(p^k-1)(p^{k-1}-1)\cdots(p-1)}.
\]
\end{lemma}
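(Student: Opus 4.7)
The plan is to count ordered linearly independent totally isotropic $k$-tuples and then divide by the number of ordered bases of an $\F_p$-space of dimension $k$. The key structural fact I would invoke is that the symplectic form on $\F_p^{2m}$ is non-degenerate, so every subspace $W$ satisfies $\dim W + \dim W^{\perp_s} = 2m$, and by the preceding lemma (every vector is self-isotropic) together with the alternating property of the form, any subspace spanned by vectors lying pairwise in each other's symplectic orthogonal is totally isotropic and therefore contained in its own symplectic orthogonal.

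I would then count ordered isotropic independent tuples $(v_1,\dots,v_k)$ inductively. Any $v_1\neq 0$ is admissible, giving $p^{2m}-1$ choices. Assume $v_1,\dots,v_i$ have been chosen so that $W_i=\spn(v_1,\dots,v_i)$ is a totally isotropic $i$-dimensional subspace; then a valid next vector $v_{i+1}$ must lie in $W_i^{\perp_s}$ (to preserve pairwise orthogonality) and outside $W_i$ (for linear independence). Since $\dim W_i^{\perp_s}=2m-i$ and $W_i\subseteq W_i^{\perp_s}$, the number of admissible $v_{i+1}$ is $p^{2m-i}-p^i$. Multiplying over $i=0,\dots,k-1$ gives the total count of ordered isotropic $k$-tuples as
\[
\prod_{i=0}^{k-1}\bigl(p^{2m-i}-p^i\bigr)
\;=\;\prod_{i=0}^{k-1} p^i\bigl(p^{2m-2i}-1\bigr).
\]

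Finally, since each isotropic $k$-subspace arises from exactly $|GL_k(\F_p)|=\prod_{i=0}^{k-1}(p^k-p^i)=\prod_{i=0}^{k-1} p^i(p^{k-i}-1)$ ordered bases, dividing gives
\[
\frac{\prod_{i=0}^{k-1} p^i(p^{2m-2i}-1)}{\prod_{i=0}^{k-1} p^i(p^{k-i}-1)}
\;=\;\frac{\prod_{i=0}^{k-1}(p^{2m-2i}-1)}{\prod_{j=1}^k (p^j-1)},
\]
after cancelling the common powers of $p$ and reindexing $j=k-i$ in the denominator. There is no serious obstacle; the argument is a classical symplectic Gaussian-coefficient count, and the only point that warrants careful justification is the inductive maintenance of the invariant $W_i\subseteq W_i^{\perp_s}$, which follows from choosing each $v_{i+1}$ in $W_i^{\perp_s}$ together with the automatic self-isotropy $\langle v_{i+1},v_{i+1}\rangle_s=0$ provided by the preceding lemma.
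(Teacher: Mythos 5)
Your proof is correct, but there is nothing in the paper to compare it against: the paper states this lemma bare, with an empty citation bracket, evidently quoting it as a known result, and provides no argument at all. Your classical orbit count therefore supplies the justification the paper omits, and it is the standard one: build ordered linearly independent isotropic tuples, noting that at step $i+1$ the admissible vectors are exactly those in $W_i^{\perp_s}\setminus W_i$, of which there are $p^{2m-i}-p^i=p^i(p^{2m-2i}-1)$ by non-degeneracy ($\dim W_i^{\perp_s}=2m-i$) and total isotropy ($W_i\subseteq W_i^{\perp_s}$); then divide by $\lvert \mathrm{GL}_k(\F_p)\rvert=\prod_{i=0}^{k-1}p^i(p^{k-i}-1)$. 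The two delicate points are exactly the ones you flag: that adjoining $v_{i+1}\in W_i^{\perp_s}$ preserves total isotropy (using the alternating property, i.e.\ the paper's Lemma~3.1, for the diagonal term), and that the count is exhaustive. For the latter you could add one sentence making the converse explicit: if $(v_1,\dots,v_k)$ is any ordered basis of a totally isotropic $k$-space $W$, then $W_i\subseteq W$ gives $W^{\perp_s}\subseteq W_i^{\perp_s}$, hence $v_{i+1}\in W\subseteq W^{\perp_s}\subseteq W_i^{\perp_s}$, so every such basis is counted exactly once and the division by $\lvert \mathrm{GL}_k(\F_p)\rvert$ is legitimate. The closing algebra (cancelling $\prod_i p^i$ and reindexing $j=k-i$) is correct, and the resulting formula passes the sanity checks $k=m=1$ (all $p+1$ lines of $\F_p^2$) and $k=m=2$ ($(p+1)(p^2+1)$ Lagrangian planes of $\F_p^4$), so your argument can be inserted verbatim as the proof the paper is missing.
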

\section{Numerical Results}\label{Numerical}
\subsection{symplectic self-orthogonal $H_{23}$-codes}
I will add them later on.

\end{document}